\documentclass[a4paper,10pt]{article}
\pagestyle{plain}
\usepackage{amssymb}
\usepackage{amsmath}
\usepackage{amsthm}
\usepackage{graphicx}

\usepackage{epstopdf}
 \usepackage[colorlinks=true]{hyperref}
\usepackage{pgf,tikz}
\hypersetup{urlcolor=blue, citecolor=red}
  \usepackage{paralist}

\textwidth=15.7cm
\textheight=22.5cm
\parskip=3pt
\parindent=8mm
\oddsidemargin=2mm
\evensidemargin=0mm
\topmargin=-0.5cm
\marginparwidth=1cm

\newtheorem{theorem}{Theorem}[section]
\newtheorem{lemma}[theorem]{Lemma}

\newtheorem{rmk}{Remark}
\newtheorem{exa}{Example}

\numberwithin{equation}{section}

\newcommand{\ep}{\varepsilon}

\newcommand{\va}{\varphi}
\newcommand{\ppp}{\partial}

\newcommand{\weight}{e^{2s\va}}

\newcommand{\R}{\mathbb{R}}

\newcommand{\www}{\widetilde}
\newcommand{\OOO}{\Omega}
\newcommand{\ooo}{\overline}
\newcommand{\ddd}{\mbox{div}\thinspace}

\newcommand{\be}{\begin{equation}}
\newcommand{\bel}{\begin{equation} \label}
\newcommand{\ee}{\end{equation}}
\newcommand{\pd}{\partial}
\def\beq{\begin{equation}}
\def\eeq{\end{equation}}
\newcommand{\bea}{\begin{eqnarray}}
\newcommand{\eea}{\end{eqnarray}}
\newcommand{\beas}{\begin{eqnarray*}}
\newcommand{\eeas}{\end{eqnarray*}}



%
\renewcommand{\div}{\mathrm{div}\,}  

\allowdisplaybreaks

\graphicspath{
  {FIGURES/PNG/}
}

\title{Lipschitz stability for an inverse hyperbolic problem of
determining two coefficients by a finite number of observations
}

\author{L. Beilina  \thanks{
Department of Mathematical Sciences, Chalmers University of Technology and
 University  of Gothenburg, SE-42196 Gothenburg, Sweden, e-mail: \texttt{\
larisa@chalmers.se}}
\and
M. Cristofol \thanks{Institut de Math\'{e}matiques de Marseille, CNRS,
UMR 7373, \'Ecole Centrale, Aix-Marseille Universit\'e, 13453 Marseille,
France,   e-mail: \texttt{\ michel.cristofol@univ-amu.fr} }
\and
S. Li \thanks{Key Laboratory of Wu Wen-Tsun Mathematics, Chinese Academy of
Sciences, School of Mathematical Sciences, University of Science and
Technology of China, 96 Jinzhai Road, Hefei, Anhui Province, 230026,
China, e-mail: \texttt{\ shuminli@ustc.edu.cn}}
\and
M. Yamamoto \thanks{Department of Mathematical Sciences, The University of
Tokyo, Komaba, Meguro, Tokyo 153, Japan, e-mail: {\tt myama@ms.u-tokyo.ac.jp
}}}
\date{}
\begin{document}
\maketitle

\baselineskip 18pt

\begin{abstract}
We consider an inverse problem of reconstructing two spatially
varying coefficients in an acoustic equation of hyperbolic type
  using interior data of solutions with suitable choices of initial
condition.
Using a Carleman estimate, we prove Lipschitz stability estimates
which ensures
unique reconstruction of both coefficients.
Our theoretical results are justified by numerical studies on the
reconstruction of two unknown coefficients using  noisy backscattered data.

\end{abstract}

\section{Statement of the problem}\label{S1}
\subsection{Introduction}
 The main purpose of this paper is
  to study the inverse problem of determining simultaneously the
  function $\rho(x)$ and the
  conductivity $p(x)$ in the following:
   \bel{11} \rho(x) \pd_t^2
  u - \div (p(x) \nabla u )= 0 \ee
  from a finite number of
  boundary observations on the domain $\Omega$ which is a bounded open
  subset of $\R^n,\; n\geq1$.\\

  The reconstruction of two coefficients
  of the principal part of an operator with a finite number of
  observations is very challenging since we mix at least two
  difficulties, see \cite{BJY08} for the case of a principal matrix
  term in the divergence form, arising from anisotropic media) or
  \cite{ImIsY} for Lame system or \cite{BCN, BCS12, L05,LY05,LY07} for
  Maxwell system.\\

 Furthermore, in this work we establish  a Lipschitz stability inequality.
 First, this stability inequality implies the uniqueness
 of the reconstruction of coefficients $\rho(x)$ and $p(x)$. Second, we can use
 it to perform numerical reconstruction with noisy observations to be more
 close to real-life applications.  \\

Bukhgeim and Klibanov \cite{BK} created the methodology by Carleman estimate
for proving the uniqueness in coefficient inverse problems and after \cite{BK},
there has been many works published  on this topic. We refer to some of them.
\cite{Bel1, Bel2, BJY08, BelY1, BeYa08}, \cite{IY2} - \cite{IY4},
\cite{Kli1} - \cite{Kli3}, \cite{KY, Y99}.
In all these works except the recent works \cite{BCL, BCN}, only theoretical
studies are presented. From other side, the existence of a stability
theorems allow us to improve the results of the numerical
reconstruction by choosing different regularization strategies in
the minimization procedure.

In particular we refer to Imanuvilov and Yamamoto \cite{IY3} which established
the Lipschitz stability for the coefficient inverse problem for a hyperbolic
equation. Our argument in this paper is a simplification of \cite{IY3} and
Klibanov and Yamamoto \cite{KY}.

  To the authors' knowledge, there exist few works which study
  numerical reconstruction based on the theoretical stability analysis
  for the inverse problem with finite and restricted measurements.
  Furthermore, the case of the reconstruction of the conductivity
  coefficient in the divergence form for the hyperbolic operator
  induces some numerical difficulties, see \cite{BAbsorb, BH, BNAbsorb, Chow}
for details.

In numerical simulations of this paper we use similar optimization
approach which was applied recently in works
\cite{BAbsorb, BCL, BCN, BJ, BNAbsorb}.
More precisely, we minimize the Tikhonov functional
in order to reconstruct unknown spatially distributed wave speed and
conductivity functions of the acoustic wave equation from transmitted
or backscattered boundary measurements.  For minimization of the
Tikhonov functional we construct the associated Lagrangian and
minimize it on the adaptive locally refined meshes using the domain
decomposition finite element/finite difference method similar to one
of \cite{BAbsorb}. Details of this method can be found in forthcoming
publication.
   The adaptive optimization method  is implemented
efficiently in
   the software package WavES \cite{waves} in  C++/PETSc
   \cite{petsc}.\\

Our numerical simulations show that we can accurately
   reconstruct location of both space-dependent wave speed and
   conductivity functions already on a coarse non-refined mesh.  The
   contrast of the conductivity function is also reconstructed
   correctly.  However, the contrast of the wave speed function should
   be improved.  In order to obtain better contrast, similarly with
   \cite{BMaxwell, BH, BJ}, we applied an adaptive finite element
   method, and refined the finite element mesh locally only in places,
   where the a posteriori error of the reconstructed coefficients was
   large.
   Our final results attained
   on a locally refined meshes show that an adaptive finite element
   method significantly improves reconstruction obtained on a coarse
   mesh. \\

   The outline of this paper is as follows.  In Section \ref{S2}, we show
 a key Carleman estimate, in Section \ref{S3} we complete the proofs of
 Theorems \ref{T1} and \ref{T2}. Finally, in section \ref{S4} we present
 numerical simulations taking into account the theoretical observations
 required in Theorem \ref{T1} as an important guidance. Section
 \ref{S5} concludes the main results of this paper.

 \subsection{Settings and main results}

Let $\Omega \subset \R^n$ be a bounded domain with smooth boundary
$\ppp\Omega$.  We consider an acoustic equation
\bel{1.1}
\rho(x)\ppp_t^2 u(x,t) - \ddd (p(x)\nabla u(x,t)) = 0, \quad x \in \OOO,
\thinspace 0 < t < T.           
\ee
To \eqref{1.1} we attach the initial and boundary conditions:
\bel{1.2}
u(x,0) = a(x), \quad \ppp_tu(x,0) =0, \quad x\in \OOO
\ee
and
\bel{1.3}
u(x,t) = h(x,t), \quad (x,t) \in \ppp\OOO \times (0,T).
\ee
We will write $u(p,\rho,a, h)$ a weak solution of the problem
\eqref{1.1}-\eqref{1.3}.
Functions $p, \rho$ are assumed to be positive on $\ooo{\OOO}$ and
are unknown  in $\Omega$. They should be determined by extra data of solutions
$u$ in $\Omega$.

Throughout this paper, we set $\ppp_j = \frac{\ppp}{\ppp x_j}$,
$\ppp_i\ppp_j = \frac{\ppp^2}{\ppp x_i\ppp x_j}$,
$\ppp_t^2 = \frac{\ppp^2}{\ppp t^2}$, $1 \le i,j \le n$.

Let $\omega \subset \OOO$ be a suitable subdomain of $\OOO$ and
$T > 0$ be given. In this paper, we consider an inverse problem of
determining coefficients $p = p(x)$ and $\rho = \rho(x)$ of
the principal term, from the interior observations:
$$
u(x,t), \quad x \in \omega, \thinspace 0 < t < T.
$$

In order to formulate our results, we need to introduce some notations.
For sufficiently smooth positive coefficients $p$ and $\rho$ and initial
and boundary data, we can prove the existence of a unique
weak solution to \eqref{1.1}-\eqref{1.3} (e.g., Lions and Magenes
\cite{LG72}), which we denote by $u = u(p,\rho,a,h)$.

Henceforth $(\cdot,\cdot)$ denotes the scalar product in $\R^n$, and
$\nu = \nu(x)$ be the unit outward normal vector to $\ppp\OOO$ at $x$.
Let the subdomain $\omega \subset \OOO$ satisfy
\bel{1.4}
\ppp\omega \supset \{ x\in \ppp\OOO; \thinspace ((x-x_0)\cdot \nu(x))) > 0\}
\ee
with some $x_0 \not\in \ooo{\OOO}$.
We note that $\omega \subset \OOO$ cannot be an arbitrary subdomain.
For example, in the case of a ball $\OOO$, the condition \eqref{1.4}  requires
that $\omega$ should be a neighborhood of a sub-boundary which is larger
than the half of $\ppp\OOO$.
The condition \eqref{1.4} is also a sufficient condition for an observability
inequality by observations in $\omega \times (0,T)$ (e.g.,
Ch VII, section 2.3 in Lions \cite{L88}).

We set
\bel{1.5}
\Lambda = \left( \sup_{x\in \OOO} \vert x-x_0\vert^2
- \inf_{x\in \OOO} \vert x-x_0\vert^2\right)^{\frac{1}{2}}.
\ee
We define admissible sets of
unknown coefficients.  For arbitrarily fixed functions $\eta_0
\in C^2(\ooo{\OOO})$, $\eta_1 \in (C^2(\ooo{\OOO}))^n$ and
constants $M_1 > 0, 0<\theta_0\le 1,
\theta_1>0$, we set
\bel{1.6}
\mathcal{U}^1 = \mathcal{U}^1_{M_1, \theta_1,\eta_0,\eta_1}
= \biggl\{ p \in C^2(\ooo{\OOO}); \thinspace
p = \eta_0, \thinspace \nabla p = \eta_1 \quad \mbox{on $\ppp\OOO$},
\ee
\begin{align*}
& \Vert p\Vert_{C^2(\ooo{\OOO})} \le M_1,\quad
p \ge \theta_1 \quad \mbox{on $\ooo{\OOO}$} \biggr\}, \\
&
\mathcal{U}^2 = \mathcal{U}^2_{M_1, \theta_1}
= \biggl\{ \rho \in C^2(\ooo{\OOO}); \thinspace
\Vert \rho\Vert_{C^2(\ooo{\OOO})} \le M_1,\quad
\rho \ge \theta_1 \quad \mbox{on $\ooo{\OOO}$} \biggr\}, \\
& \mathcal{U} = \mathcal{U}_{M_1,\theta_0,\theta_1,\eta_0,\eta_1,x_0}
= \biggl\{(p,\rho) \in \mathcal{U}^1\times \mathcal{U}^2;
\quad \frac{(\nabla(p\rho^{-1})\cdot (x-x_0))}
{2p\rho^{-1}(x)} < 1 - \theta_0, \quad x \in \ooo{\OOO\setminus \omega}
\biggr\}.
\end{align*}

We note that there exists a constant $M_0>0$ such that
$\left\Vert \nabla\left( \frac{p}{\rho}\right)
\right\Vert_{C(\ooo{\OOO})} \le M_0$ for each $(p,\rho) \in
\mathcal{U}^1 \times \mathcal{U}^2$.
 \quad

Then we choose a constant $\beta > 0$ such that
\bel{1.7}
\beta + \frac{M_0\Lambda}{\sqrt{\theta_1}}\sqrt{\beta} < \theta_0\theta_1,
\quad \theta_1\inf_{x\in \OOO}\vert x-x_0\vert^2 - \beta\Lambda^2 > 0.
\ee
Here we note that such $\beta>0$ exists by $x_0 \not\in \ooo{\OOO}$, and
in fact $\beta > 0$ should be sufficiently small.

We are ready to state our first main result.
\\
\vspace{0.2cm}
\begin{theorem}\label{T1}
Let $q \in \mathcal{U}^1$ be arbitrarily fixed and
let $a_1, a_2 \in C^3(\ooo{\OOO})$ satisfy
\bel{1.8}
\left\{ \begin{array}{rl}
& \vert \ddd (q\nabla a_{\ell}) \vert > 0, \quad \mbox{$\ell=1$ or
$\ell=2$,} \\
& ((\ddd (q\nabla a_2)\nabla a_1 - \ddd (q\nabla a_1)\nabla a_2)
\cdot (x- x_0)) > 0 \quad \mbox{on $\ooo{\OOO}$}.
\end{array}\right.
\ee
We further assume that
$$
u(q,\sigma,a_{\ell}, h_{\ell})
\in W^{4,\infty}(\OOO\times (0,T)), \quad \ell = 1,2
$$
and
\bel{1.9}
T > \frac{\Lambda}{\sqrt{\beta}}.                      
\ee
Then there exists a constant $C>0$ depending on $\OOO, T, \mathcal{U},
q, \sigma$ and a constant $M_2>0$ such that
\bel{1.10}
\Vert p-q\Vert_{H^1(\OOO)} + \Vert \rho - \sigma\Vert_{L^2(\OOO)}
\le C\sum_{\ell=1}^2 \Vert u(p,\rho,a_{\ell}, h_{\ell}) -
u(q,\sigma,a_{\ell}, h_{\ell})\Vert_{H^3(0,T;L^2(\omega))}
\ee
for each $(p,\rho) \in \mathcal{U}$ satisfying
\bel{1.11}
\Vert u(p,\rho,a_{\ell}, h_{\ell}) \Vert_{W^{4,\infty}(\OOO\times (0,T))}
\le M_2.                              
\ee
\end{theorem}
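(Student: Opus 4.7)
The plan is to follow the Bukhgeim--Klibanov scheme in the style of Imanuvilov--Yamamoto \cite{IY3} and Klibanov--Yamamoto \cite{KY}, using two well-chosen initial data to separate the two unknown coefficients. Put $f = p - q$, $g = \rho - \sigma$, $v_\ell = u(q,\sigma,a_\ell,h_\ell)$, $u_\ell = u(p,\rho,a_\ell,h_\ell)$, and $y_\ell = u_\ell - v_\ell$ for $\ell = 1,2$. Subtracting the PDEs for $u_\ell$ and $v_\ell$, the difference $y_\ell$ vanishes on $\ppp\OOO \times (0,T)$ and satisfies
\[
\rho\,\ppp_t^2 y_\ell - \ddd(p\nabla y_\ell) \;=\; -\,g\,\ppp_t^2 v_\ell + \ddd(f\nabla v_\ell), \qquad y_\ell(\cdot,0) = \ppp_t y_\ell(\cdot,0) = 0.
\]
Using the equation satisfied by $v_\ell$ together with $\ppp_tv_\ell(\cdot,0)=0$, one evaluates
\[
\ppp_t^2 y_\ell(x,0) \;=\; \frac{1}{\rho(x)}\Bigl\{ -\,g(x)\,\sigma(x)^{-1}\ddd(q\nabla a_\ell)(x) + \ddd\bigl(f\nabla a_\ell\bigr)(x) \Bigr\}, \qquad \ppp_t^3 y_\ell(x,0) = 0.
\]
The two relations for $\ell = 1, 2$ form an algebraic system in $(f,\nabla f, g)$ from which, by eliminating $g$ (using $|\ddd(q\nabla a_\ell)|>0$ from (1.8)), one extracts the first-order identity
\[
\mathbf{W}(x)\cdot\nabla f(x) + A(x) f(x) \;=\; \mathcal{B}\bigl(\ppp_t^2 y_1(x,0),\,\ppp_t^2 y_2(x,0);x\bigr),
\]
where $\mathbf{W} = \ddd(q\nabla a_2)\nabla a_1 - \ddd(q\nabla a_1)\nabla a_2$ satisfies $(\mathbf{W}\cdot(x-x_0))>0$ on $\ooo{\OOO}$ by (1.8).

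\textbf{Applying the Carleman estimate.} The vanishing $\ppp_t^3 y_\ell(\cdot,0)=0$ lets me extend $w_\ell := \ppp_t^2 y_\ell$ evenly in $t$ to $\OOO\times(-T,T)$. Since $w_\ell$ still solves a hyperbolic equation of the same type, I apply the Carleman estimate from Section \ref{S2} with weight $\va(x,t) = |x - x_0|^2 - \beta t^2$. The condition (1.9) ensures $\beta T^2 > \Lambda^2$, so $\max_{x\in\OOO}\va(x,\pm T) < \min_{x\in\OOO}\va(x,0)$, and the standard Bukhgeim--Klibanov calculation
\[
\int_\OOO |w_\ell(x,0)|^2 e^{2s\va(x,0)}\,dx \;=\; -\int_0^T \ppp_t\!\!\int_\OOO |w_\ell|^2 e^{2s\va}\,dx\,dt + (\text{terms at }t=T)
\]
bounds the left-hand side by the Carleman observation term on $\omega$, plus $t=T$ contributions controlled by the a priori bound (1.11) and the smallness of $e^{2s\va(\cdot,T)}$, plus the Carleman source term. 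The sources involve $\ppp_t^4 v_\ell$ and $\ppp_t^2 \ddd(f\nabla v_\ell)$, which remain controlled by $\|f\|_{H^1(\OOO)}$, $\|g\|_{L^2(\OOO)}$, and the $W^{4,\infty}$ bound on the $v_\ell$.

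\textbf{Closing the estimate.} Substituting the identity for $w_\ell(x,0)$ above into the Carleman-weighted bound yields
\[
\int_\OOO \bigl| \mathbf{W}\!\cdot\!\nabla f + Af \bigr|^2 e^{2s\va(x,0)}\,dx \;\lesssim\; \sum_{\ell=1}^2 \|y_\ell\|_{H^3(0,T;L^2(\omega))}^2 + \frac{1}{s}\!\int_\OOO\!\bigl(|\nabla f|^2 + |f|^2\bigr) e^{2s\va(x,0)}\,dx + \text{exp.\ small}.
\]
The alignment $(\mathbf{W}\cdot(x-x_0))>0 = \tfrac{1}{2}(\mathbf{W}\cdot\nabla_x\va(\cdot,0))>0$ is exactly the pseudo-convexity needed to derive a first-order Carleman estimate giving $s\int|\nabla f|^2 e^{2s\va(\cdot,0)} + s^3\int|f|^2 e^{2s\va(\cdot,0)}$ on the left, and the quantitative constraint (1.7) on $\beta$ is what permits these terms to absorb the source contributions without losing the sign. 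Taking $s$ sufficiently large and dividing by $e^{2s\min\va(\cdot,0)}$ yields $\|f\|_{H^1(\OOO)}$, after which the explicit formula $g = \sigma\,[\ddd(f\nabla a_\ell) - \rho\,\ppp_t^2 y_\ell(\cdot,0)]/\ddd(q\nabla a_\ell)$ (on the set where $\ddd(q\nabla a_\ell)\neq 0$) immediately provides $\|g\|_{L^2(\OOO)}$. The delicate step I expect to be the main obstacle is precisely the simultaneous balancing of two Carleman estimates — the hyperbolic one for $w_\ell$ and the first-order one for $f$ — so that the gradient term $\nabla f$ appearing in the source of the original equation does not overwhelm the left-hand side; this is exactly what the quantitative smallness (1.7) of $\beta$ is designed to ensure.
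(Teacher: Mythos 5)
Your overall architecture coincides with the paper's (difference of solutions, evaluation of $\ppp_t^2y_\ell(\cdot,0)$, elimination of $g$ to get a first--order equation in $f$ with transport field $\mathbf{W}=\ddd(q\nabla a_2)\nabla a_1-\ddd(q\nabla a_1)\nabla a_2$, Bukhgeim--Klibanov weighted trace estimate plus hyperbolic Carleman estimate). But the closing step as you state it would fail. You claim that the first--order Carleman estimate for $Qf=\mathbf{W}\cdot\nabla f+Af$ yields $s\int_\OOO|\nabla f|^2e^{2s\va(x,0)}dx$ on the left from $\int_\OOO|Qf|^2e^{2s\va(x,0)}dx$ alone. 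This is false: $Qf$ contains only the directional derivative $\mathbf{W}\cdot\nabla f$, so the components of $\nabla f$ transverse to $\mathbf{W}$ are completely invisible to $\Vert Qf\Vert_{L^2}$, and no choice of weight recovers them (the correct power for the zeroth-order term is also $s^2$, not $s^3$). To get the $H^1(\OOO)$ norm of $f$ one must differentiate the first--order equation, i.e.\ use the estimate $s^2\int(|f|^2+|\nabla f|^2)e^{2s\va(x,0)}\le C\int(|Qf|^2+|\nabla(Qf)|^2)e^{2s\va(x,0)}$ (the paper's Lemma 2.3, inequality (2.15)), and this requires a weighted bound on $\nabla y_2(a_\ell)(\cdot,0)$, not just on $y_2(a_\ell)(\cdot,0)$. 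Your Bukhgeim--Klibanov identity only produces the $L^2$ trace $\int|w_\ell(x,0)|^2e^{2s\va(x,0)}dx$. The paper obtains the missing gradient trace by multiplying the equation by $2\ppp_ty_2\,\chi\weight$ and integrating over $\OOO\times(-T,0)$, so that the term $\int_\OOO p|\nabla y_2(x,0)|^2e^{2s\va(x,0)}dx$ appears from the energy density; this is the content of (3.9)--(3.12) and is absent from your argument.

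A second gap concerns the remainder terms coming from the cut-off (equivalently, your ``terms at $t=T$''). You propose to control them by the a priori bound (1.11) together with the smallness of $e^{2s\va(\cdot,\pm T)}$. That leaves an additive term of the form $Cs^3e^{2s(d_0-\ep_0)}M_2^2$ on the right; after dividing by $e^{2s(d_0+\ep_0)}$ and fixing $s$, this is a fixed positive constant independent of the data, so you would only obtain $\Vert f\Vert_{H^1}+\Vert g\Vert_{L^2}\le C\www{D}+\epsilon$ rather than the Lipschitz estimate (1.10). The paper avoids this with its Third Step: an energy estimate showing $\Vert y_k\Vert^2_{H^1(Q)}\le C\int_\OOO(|\nabla_{x,t}y_k(x,0)|^2+|y_k(x,0)|^2)dx+C\int_Q|G_k|^2dxdt$, which re-expresses the cut-off remainder in terms of $|\nabla f|^2+|f|^2+|g|^2$ so that it can be absorbed multiplicatively into the left-hand side for large $s$. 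Finally, a smaller point: the absorption of the bulk term $\int_Q(|\nabla f|^2+|f|^2+|g|^2)\weight dxdt$ is achieved by the observation that $\int_{-T}^Te^{2s(\va(x,t)-\va(x,0))}dt=o(1)$ as $s\to\infty$, not by the smallness condition (1.7) on $\beta$, whose role is to guarantee the hyperbolic Carleman estimate and the separation $\va(\cdot,0)>d_0>\va(\cdot,\pm T)$.
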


The conclusion \eqref{1.10} is a Lipschitz stability estimate with twice
changed
initial displacement satisfying \eqref{1.8}.  In Imanuvilov and Yamamoto
\cite{IY4}, by assuming that $\rho = \sigma \equiv 1$, a H\"older stability
estimate is proved for $p-q$, provided that $p$ and $q$ vary within a similar
admissible set.  However, in the case of two
unknown coefficients $p, \rho$, the condition \eqref{1.8}
requires us to fix $q \in \mathcal{U}^1$ and the theorem gives stability
only around given $q$, in general.

\begin{rmk}
In this remark, we will show that with special choice of $a_1, a_2$,
the condition \eqref{1.8} can be satisfied uniformly for
$q \in \mathcal{U}^1$, which guarantees that the set of $a_1, a_2$
satisfying (1.9), is not empty.  We fix $a_1, b_2 \in C^2(\ooo{\OOO})$
satisfying
\bel{1.12}
(\nabla a_1(x) \cdot (x-x_0)) > 0, \quad
\vert \nabla b_2(x)\vert > 0, \quad x \in \ooo{\OOO}.  
\ee
We choose $\gamma > 0$ sufficiently large and we set
$$
a_2(x) = e^{\gamma b_2(x)}.
$$
Then $\ppp_ka_2 = \gamma(\ppp_kb_2)e^{\gamma b_2(x)}$ and
$$
\Delta a_2 = (\gamma^2\vert \nabla b_2\vert^2 + \gamma\Delta b_2)
e^{\gamma b_2},
$$
and so
$$
\ddd (q\nabla a_2) = q\Delta a_2 + \nabla q\cdot \nabla a_2
= (q\gamma^2 \vert \nabla b_2\vert^2 + O(\gamma))e^{\gamma b_2}
$$
and
\begin{align*}
& (\ddd (q\nabla a_2)\nabla a_1 - \ddd (q\nabla a_1)\nabla a_2)
\cdot (x- x_0)) \\
=& e^{\gamma b_2}\{ (q\gamma^2 \vert \nabla b_2\vert^2 + O(\gamma))
\nabla a_1 - \ddd (q\nabla a_1) \gamma\nabla b_2\} \cdot (x-x_0)\\
=& e^{\gamma b_2}(
q\gamma^2 \vert \nabla b_2\vert^2(\nabla a_1 \cdot (x-x_0))
+ O(\gamma))\\
\ge & e^{\gamma \min_{x\in\overline{\Omega}} b_2(x)}
(\gamma^2\theta_1 \min_{x\in \overline{\Omega}}
\{\vert \nabla b_2(x)\vert^2(\nabla a_1(x)\cdot (x-x_0))\}
+ O(\gamma))
\end{align*}
for each $q\in \mathcal{U}^1$.
Therefore, for large $\gamma > 0$, by \eqref{1.12} we see that \eqref{1.8} is
fulfilled.
Moreover this choice of $a_1, a_2$ is independent of choices of
$q \in \mathcal{U}^1$, and there exists a constant $C>0$, which is dependent
on $\Omega, T, \mathcal{U}, M_2$ but independent of choices
$(p,\rho), (q,\sigma)$, such that (1.11) holds for each
$(p,\rho), (q,\sigma) \in \mathcal{U}$.
\end{rmk}

\vspace{0.3cm}

Without special choice such as (1.13), we consider the stability estimate
by not fixing $q$.
If we can suitably choose initial values $(n+1)$-times, then
we can establish the Lipschitz stability for arbitrary
$(p, \rho), (q,\sigma) \in \mathcal{U}$.
\\
\begin{theorem}\label{T2}
Let $A :=
\left( \begin{array}{cc}
a_1\\
\vdots \\
a_{n+1}\\
\end{array}\right)
\in (C^2(\ooo{\OOO}))^{n+1}$ satisfy
\bel{1.13}
\mbox{det} \thinspace (\ppp_1A(x), ..., \ppp_nA(x), \Delta A(x))
\ne 0, \quad x \in \ooo{\OOO}.              
\ee
We assume \eqref{1.9}.  Then there exists a constant $C>0$ depending on
$\OOO, T, \mathcal{U}, a_{\ell}, h_{\ell}$, $\ell = 1, 2,..., n+1$
and a constant $M_2>0$ such that
\bel{1.14}
\Vert p-q\Vert_{H^1(\OOO)} + \Vert \rho - \sigma\Vert_{L^2(\OOO)}
\le C\sum_{\ell=1}^{n+1} \Vert u(p,\rho,a_{\ell}, h_{\ell})
- u(q,\sigma,a_{\ell}, h_{\ell})\Vert_{H^2(0,T;L^2(\omega))}
\ee
for each $(p,\rho), (q,\sigma) \in \mathcal{U}$ satisfying
$$
\Vert u(p,\rho,a_{\ell}, h_{\ell}) \Vert_{W^{4,\infty}(\OOO\times (0,T))},
\Vert u(q,\sigma,a_{\ell}, h_{\ell}) \Vert_{W^{4,\infty}(\OOO\times (0,T))}
\le M_2, \quad \ell=1, 2, ..., n+1.
$$
\end{theorem}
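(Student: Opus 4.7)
The proof uses the Bukhgeim-Klibanov strategy with $n+1$ measurements. Set $y_\ell := u(p,\rho,a_\ell,h_\ell) - u(q,\sigma,a_\ell,h_\ell)$, $f := p-q$, $g := \sigma - \rho$, and $u_\ell := u(q,\sigma,a_\ell,h_\ell)$. Subtracting the equations for the two sets of coefficients and using the equation for $(q,\sigma)$, one finds
\[
\rho\partial_t^2 y_\ell - \ddd(p\nabla y_\ell) = F_\ell := g\,\partial_t^2 u_\ell + \ddd(f\nabla u_\ell),
\]
with $y_\ell(\cdot,0)=\partial_t y_\ell(\cdot,0)=0$ and $y_\ell=0$ on $\partial\Omega\times(0,T)$. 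Since $\partial_t^2 u_\ell(\cdot,0)=\sigma^{-1}\ddd(q\nabla a_\ell)$,
\[
F_\ell(x,0)=\sum_{j=1}^n \partial_j a_\ell(x)\,\partial_j f(x) + \Delta a_\ell(x)\,f(x) + \ddd(q\nabla a_\ell)(x)\,\frac{g(x)}{\sigma(x)}.
\]
Viewing this as an $(n+1)\times(n+1)$ linear system for $(\partial_1 f,\dots,\partial_n f,g/\sigma)$, treating $f$ as a lower-order term, the coefficient matrix has columns $\partial_j A$ and $\ddd(q\nabla A)=q\Delta A+\nabla q\cdot\nabla A$. Column reduction gives determinant $q(x)\det(\partial_1 A(x),\dots,\partial_n A(x),\Delta A(x))$, bounded away from zero on $\ooo\Omega$ by \eqref{1.13} and $q\ge\theta_1>0$. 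Inverting pointwise,
\[
|\nabla f(x)|^2+|g(x)|^2\le C\sum_{\ell=1}^{n+1}|F_\ell(x,0)|^2+C|f(x)|^2,\qquad x\in\ooo\Omega,
\]
uniformly over $(p,\rho),(q,\sigma)\in\mathcal{U}$.

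Next, extend each $y_\ell$ evenly to $(-T,T)$; thanks to $\partial_t y_\ell(\cdot,0)=0$ this extension is $C^1$ across $t=0$. Apply the Carleman estimate of Section \ref{S2} with weight $\varphi(x,t)=|x-x_0|^2-\beta t^2$ and $\beta$ from \eqref{1.7}; by \eqref{1.9}, $\varphi(x,\pm T)<\varphi(x,0)$ uniformly on $\Omega$, so the boundary-in-time contributions are subdominant for large $s$. The standard Bukhgeim-Klibanov integration by parts in $t$ converts the bulk Carleman estimate into an $L^2$-bound at $t=0$: using $\partial_t^2 y_\ell(\cdot,0)=F_\ell(\cdot,0)/\rho$ and the equation itself, we obtain for $s\ge s_0$
\[
\int_\Omega |F_\ell(x,0)|^2 e^{2s\varphi(x,0)}\,dx \le C\|y_\ell\|^2_{H^2(0,T;L^2(\omega))}+C\!\int_{-T}^T\!\!\int_\Omega\bigl(|F_\ell|^2+|\partial_t F_\ell|^2\bigr)e^{2s\varphi}\,dxdt.
\]
Assumption \eqref{1.11} gives $|F_\ell|^2+|\partial_t F_\ell|^2 \le C_{M_2}(|f|^2+|\nabla f|^2+|g|^2)$ pointwise in $(x,t)$, and the Gaussian estimate $\int_{-T}^T e^{-2s\beta t^2}\,dt\le Cs^{-1/2}$ controls the space-time bulk by $C s^{-1/2}\int_\Omega(|f|^2+|\nabla f|^2+|g|^2)e^{2s\varphi(x,0)}\,dx$.

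Summing over $\ell$, combining with the pointwise inversion and absorbing the $Cs^{-1/2}$-terms on the right for $s$ large, we arrive at
\[
\int_\Omega(|\nabla f|^2+|g|^2)\,e^{2s\varphi(x,0)}\,dx\le C\sum_{\ell=1}^{n+1}\|y_\ell\|^2_{H^2(0,T;L^2(\omega))}+C\!\int_\Omega|f|^2\,e^{2s\varphi(x,0)}\,dx.
\]
Since $p\equiv q\equiv\eta_0$ on $\partial\Omega$ by definition of $\mathcal{U}^1$, $f\in H^1_0(\Omega)$, and $x_0\notin\ooo\Omega$ gives $|\nabla\varphi(x,0)|=2|x-x_0|\ge c_0>0$ on $\ooo\Omega$. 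A standard weighted Poincaré argument (apply Poincaré to $fe^{s\varphi(\cdot,0)}\in H^1_0(\Omega)$ and integrate by parts the cross term, using $\Delta\varphi(\cdot,0)=2n\ge 0$) then yields $\int_\Omega|f|^2 e^{2s\varphi(x,0)}\,dx\le \frac{C}{s^2}\int_\Omega|\nabla f|^2 e^{2s\varphi(x,0)}\,dx$, so the residual $|f|^2$-term is absorbed into the left-hand side for $s$ sufficiently large. We conclude $\|\nabla f\|^2_{L^2(\Omega)}+\|g\|^2_{L^2(\Omega)}\le C\sum_{\ell=1}^{n+1}\|y_\ell\|^2_{H^2(0,T;L^2(\omega))}$, and ordinary Poincaré upgrades this to the $H^1$-bound on $f=p-q$, establishing \eqref{1.14}. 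The principal technical obstacle is to choose $s$ simultaneously so that the $s^{-1/2}$ Gaussian gain absorbs the Carleman bulk source and the $s^{-2}$ weighted-Poincaré gain absorbs the residual $|f|^2$; this balance hinges on the sharp Carleman estimate of Section \ref{S2} and on the geometric assumption $x_0\notin\ooo\Omega$ that underlies \eqref{1.4}--\eqref{1.7}.
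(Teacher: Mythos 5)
Your proposal is correct and follows essentially the same route as the paper: the same linearization at $t=0$ leading to the $(n+1)\times(n+1)$ algebraic system whose determinant reduces by column operations to $\pm\frac{q}{\sigma}\det(\ppp_1A,\dots,\ppp_nA,\Delta A)$, the same Bukhgeim--Klibanov weighted-energy/Carleman step to bound $F_\ell(\cdot,0)$ by the $\omega$-observations plus an $o(1)$-absorbed bulk term, and the same $s^{-2}$ weighted Poincar\'e (the paper's Lemma \ref{L3} with $Q_0f=(x-x_0)\cdot\nabla f$) to absorb the residual $|f|^2$. The only deviations are presentational: you work with the quadratic weight $|x-x_0|^2-\beta t^2$ rather than the exponentiated weight $e^{\lambda\psi}$ the paper's Lemma \ref{L1} actually requires, and you compress the cut-off/energy-estimate bookkeeping (the paper's \eqref{3.16} and the $e^{2s(d_0-\ep_0)}$ versus $e^{2s(d_0+\ep_0)}$ comparison) into the remark that the temporal boundary contributions are subdominant; neither affects the validity of the argument.
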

\begin{exa}
This example illustrates how to choose initial values satisfying \eqref{1.13}.
Although in Theorem \ref{T2} , we have to take more observations, the condition for
the initial values is more generous compared with Theorem \ref{T1}.
For example, we can choose the following initial displacement
$a_1, ..., a_{n+1}$: let $D= (d_{ij})_{1\le i,j \le n}$ be a matrix such that
$d_{ij} \in \R$ and $D^{-1}$ exists.  Then we give linear functions
$a_1, ..., a_n$ by
$$
a_{\ell}(x) = \sum_{k=1}^n d_{\ell k}x_k, \quad \ell=1, 2, ..., n
$$
and we choose $a_{n+1}(x)$ satisfying $\Delta a_{n+1}(x) \ne 0$ for
$x \in \ooo{\OOO}$.  Then we can easily verify that this choice
$a_1, ..., a_{n+1}$ satisfies \eqref{1.13}.
\end{exa}
\vspace{0.2cm}

We note that Theorems \ref{T1} and \ref{T2}  yield the uniqueness for our
inverse problem in the respective case.

\section{The Carleman estimate for a hyperbolic equation}\label{S2}

We show a Carleman estimate for a second-order hyperbolic equation.
We recall that $\mathcal{U}$ is defined by \eqref{1.6}.

Let us set
$$
Q = \OOO \times (-T,T).
$$
For $x_0 \not\in \ooo{\OOO}$ and $\beta > 0$ satisfying \eqref{1.7},
we define the functions $\psi = \psi(x,t)$ and $\va = \va(x,t)$ by
\bel{2.1}
\psi(x,t) = \vert x-x_0\vert^2 - \beta t^2       
\ee
and
\bel{2.2}
\va(x,t) = e^{\lambda\psi(x,t)}              
\ee
with parameter $\lambda > 0$.  We add a constant $C_0 > 0$ if necessary
so that we can assume that $\psi(x,t) \ge 0$ for $(x,t) \in Q$, so that
$$
\va(x,t) \ge 1, \quad (x,t) \in \ooo{Q}.
$$

Henceforth $C>0$ denotes generic constants which are independent of
parameter $s>0$ in the Carleman estimates and choices of $(p,\rho),
(q, \sigma) \in \mathcal{U}$.

We show a Carleman estimate which is derived from Theorem \ref{T2} in
Imanuvilov \cite{Ima2}. See Imanuvilov and Yamamoto \cite{IY4} for
a concrete sufficient condition on the coefficients yielding
a Carleman estimate.
\\
\vspace{0.2cm}
\begin{lemma}\label{L1}
We assume $(\mu,1) \in \mathcal{U}$, and that \eqref{1.4} holds for some
$x_0 \not\in \ooo{\OOO}$.  Let $y \in H^1(Q)$ satisfy
\bel{2.3}
\ppp_t^2y(x,t) - \mu\Delta y = F \quad \mbox{in $Q$}  
\ee
and
\bel{2.4}
y(x,t) = 0, \quad (x,t)\in \ppp\OOO\times (-T,T), \quad
\ppp_t^ky(x,\pm T) = 0, \quad x\in \OOO, \thinspace k=0,1.  
\ee
Let
\bel{2.5}
T > \frac{\Lambda}{\sqrt{\beta}}.              
\ee
We fix $\lambda>0$ sufficiently large.  Then there exist constants
$s_0 > 0$ and $C>0$ such that
\bel{2.6}
\int_Q (s\vert \nabla_{x,t}y\vert^2 + s^3\vert y\vert^2)\weight dxdt
\le C\int_Q \vert F\vert^2\weight dxdt
+ C\int^T_{-T}\int_{\omega} (s\vert \ppp_ty\vert^2
+ s^3\vert y\vert^2) \weight dxdt                 
\ee
for all $s > s_0$.
\end{lemma}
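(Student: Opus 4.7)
The plan is to derive Lemma~\ref{L1} by invoking the general Carleman estimate for second-order hyperbolic operators established by Imanuvilov (Theorem~2 of \cite{Ima2}), with the concrete sufficient condition of \cite{IY4} serving as a template, applied to the operator $P := \ppp_t^2 - \mu\Delta$ with the weight $\va = e^{\la\psi}$ defined in \eqref{2.1}--\eqref{2.2}. All the real work lies in verifying that $\psi$ is strictly pseudoconvex for $P$ on $\ooo{Q}\setminus(\omega\times(-T,T))$; once this is in place, \eqref{2.6} follows directly, because the hypothesis \eqref{2.4} kills every endpoint term at $t=\pm T$ via the zero Cauchy data and forces the surviving lateral boundary terms on $\ppp\OOO\times(-T,T)$ to reduce, through \eqref{1.4}, to the interior observation over $\omega$.

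For the pseudoconvexity step I would first compute
\[
\nabla_{x,t}\psi = (2(x-x_0),\ -2\beta t), \qquad p_2(x,t;\xi,\tau) = -\tau^2 + \mu(x)\vert\xi\vert^2,
\]
and then evaluate the iterated Poisson bracket $\{p_2,\{p_2,\psi\}\}$ on the characteristic cone $\{p_2 = 0\}$ restricted to $\xi\cdot\nabla_x\psi = \tau\ppp_t\psi$. Using $\tau^2 = \mu\vert\xi\vert^2$, a direct calculation reduces strict pseudoconvexity to the two algebraic inequalities
\[
\frac{\nabla\mu\cdot(x-x_0)}{2\mu} < 1-\theta_0 \text{ on } \ooo{\OOO\setminus\omega}, \qquad \theta_1\inf_{x\in\OOO}\vert x-x_0\vert^2 - \beta\Lambda^2 > 0,
\]
together with the dominance $\beta + M_0\Lambda\sqrt{\beta}/\sqrt{\theta_1} < \theta_0\theta_1$. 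The first is exactly the admissibility condition in the definition of $\mathcal{U}$ evaluated at $(\mu,1)$, and the other two are precisely the constraints in \eqref{1.7}. The lower bound \eqref{2.5} on $T$ enters in parallel, guaranteeing $\max_x\psi(x,\pm T) < \min_x\psi(x,0)$, so that $\weight$ concentrates near $t=0$ and no endpoint contribution spoils the inequality.

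With pseudoconvexity secured, the standard integration-by-parts scheme behind Imanuvilov's theorem produces boundary terms on $\ppp\OOO\times(-T,T)$ weighted by $\ppp_{\nu}\psi = 2((x-x_0)\cdot\nu(x))$. Since $y|_{\ppp\OOO}=0$ forces $\nabla_{x,t}y$ to be normal on $\ppp\OOO$, these terms carry the sign of $((x-x_0)\cdot\nu)$, and the geometric hypothesis \eqref{1.4} says precisely that the ``bad'' positive part of the lateral boundary is contained in $\ppp\omega\cap\ppp\OOO$; a standard cutoff-and-localization argument then replaces this boundary integrand by the interior observation $s\vert\ppp_ty\vert^2 + s^3\vert y\vert^2$ on $\omega\times(-T,T)$. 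Choosing $\la>0$ sufficiently large to absorb the lower-order convexifying contributions finally yields \eqref{2.6} for all $s>s_0$. I expect the main obstacle to be the tedious but classical pseudoconvexity verification uniformly in $(\mu,1)\in\mathcal{U}$; the delicate algebraic balance encoded in \eqref{1.7} and the sharp lower bound \eqref{2.5} on $T$ are exactly what is needed to close that calculation.
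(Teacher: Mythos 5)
Your proposal takes essentially the same route as the paper, which does not prove Lemma \ref{L1} at all but simply quotes it from Theorem 2 of Imanuvilov \cite{Ima2} (pointing to \cite{IY4} for the concrete pseudoconvexity conditions); your reduction to verifying strict pseudoconvexity of $\psi$ via the admissibility condition in $\mathcal{U}$ and \eqref{1.7}, followed by handling the lateral boundary through \eqref{1.4}, is exactly the verification those references supply. Two minor points only: the constraint defining the characteristic set should read $\mu\,\xi\cdot\nabla_x\psi=\tau\,\ppp_t\psi$, and in Lemma \ref{L1} the endpoint terms at $t=\pm T$ vanish by the hypothesis \eqref{2.4} rather than by \eqref{2.5}, which is really needed only for the cut-off argument of Lemma \ref{L2}.
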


In the Lemma \ref{L1}, we notice that the constants $C>0$ and $s_0>0$ are
determined by $\mathcal{U}, \OOO, T, x_0, \omega$ and independent of
$s$ and choices of the coefficients $(\mu,1), (p,\rho), (q, \sigma)
\in \mathcal{U}$.

Setting $\Gamma = \{ x\in \ppp\OOO;\thinspace
(x-x_0) \cdot \nu(x) \ge 0\}$, one can prove a Carleman estimate whose
second term on the right-hand side of (2.6) is replaced by
$$
\int^T_{-T} \int_{\Gamma} s\vert \nabla y\cdot \nu\vert^2
\weight dSdt,
$$
and as for a direct proof, see Bellassoued and Yamamoto \cite{BY},
Cheng, Isakov, Yamamoto and Zhou \cite{ChIsYZh}.
In Isakov \cite{Is1}, a similar Carleman estimate is established
for supp $y \subset Q$, which cannot be applied to the case
where we have no Neumann data outside of $\Gamma$.

For the Carleman estimate, we have to assume that
$\ppp_t^ky(\cdot,\pm T) = 0$ in $\OOO$ for $k=0,1$, but
$u(p,\rho,a,h)$, $u(q,\sigma,a,h)$ do not satisfy this condition.
Thus we need a cut-off function which is defined as follows.

By \eqref{1.9} and the definitions \eqref{2.1} and \eqref{2.2} of $\psi, \va$,
we can choose $d_0 \in \R$ such that
\bel{2.7}
\va(x,0) > d_0, \quad \va(x,\pm T) < d_0, \qquad x \in \ooo{\OOO}.
\ee
Hence, for small $\ep_0 > 0$, we find a sufficiently small $\ep_1>0$ such
that
\bel{2.8}
\va(x,t) \ge d_0 + \ep_0, \quad (x,t) \in \ooo{\OOO \times [-\ep_1,\ep_1]}
\ee
and
\bel{2.9}
\va(x,t) \le d_0 - \ep_0, \quad (x,t) \in \ooo{\OOO} \times
([-T, -T+2\ep_1] \cup [T-2\ep_1, T]).           
\ee
We define a cut-off function satisfying $0 \le \chi \le 1$,
$\chi \in C^{\infty}(\R)$ and
\bel{2.10}
\chi(t) =
\left\{ \begin{array}{rl}
0, \qquad & -T \le t \le -T+\ep_1, \quad T-\ep_1 \le t \le T, \\
1, \qquad &-T+2\ep_1 \le t\le T-2\ep_1.
\end{array}\right.                      
\ee
Henceforth we write $\chi'(t) = \frac{d\chi}{dt}(t)$,
$\chi''(t) = \frac{d^2\chi}{dt^2}(t)$.

In view of the cut-off function, we can prove

\begin{lemma}\label{L2}
Let $(p,\rho) \in \mathcal{U}$ and let \eqref{2.5} hold, and we fix
$\lambda>0$ sufficiently large.  Then there exist constants
$s_0 > 0$ and $C>0$ such that
$$
\int_Q (s\vert \nabla_{x,t}u\vert^2 + s^3\vert u\vert^2)\weight dxdt
\le C\int_Q \vert \rho\ppp_t^2u - \ddd (p\nabla u)\vert^2\weight dxdt
 $$
\bel{2.11}
+ Cs^3e^{2s(d_0-\ep_0)}\Vert u\Vert^2_{H^1(Q)}
+ C\int^T_{-T}\int_{\omega} (s\vert \ppp_tu\vert^2
+ s^3\vert u\vert^2) \weight dxdt
\ee
for all $s > s_0$ and $u \in H^1(Q)$ satisfying
$\rho\ppp_t^2u - \ddd (p\nabla u) \in L^2(Q)$ and
$u\vert_{\ppp\OOO} = 0$.
\end{lemma}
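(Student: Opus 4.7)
The plan is to reduce Lemma~\ref{L2} to Lemma~\ref{L1} by (i) normalizing the density and (ii) introducing the cut--off $\chi$ to enforce the vanishing at $t=\pm T$ required by Lemma~\ref{L1}.

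First I would divide the equation by $\rho$. Setting $\mu = p/\rho$ and using $\ddd(p\nabla u) = p\Delta u + \nabla p\cdot\nabla u$, the hypothesis becomes
\beq
\ppp_t^2 u - \mu\Delta u = \frac{1}{\rho}\bigl(\rho\ppp_t^2 u - \ddd(p\nabla u)\bigr) + \frac{1}{\rho}\nabla p\cdot \nabla u.
\eeq
I would then verify that $(\mu,1)$ belongs to a suitable admissible set with the \emph{same} geometric constraint $\frac{(\nabla\mu\cdot(x-x_0))}{2\mu}<1-\theta_0$ on $\ooo{\OOO\setminus\omega}$, since this quantity coincides with $\frac{(\nabla(p\rho^{-1})\cdot(x-x_0))}{2p\rho^{-1}}$; the remaining $C^2$--bound and positivity of $\mu$ follow from those of $p,\rho$ in $\mathcal U$, so Lemma~\ref{L1} applies with constants $C,s_0$ uniform over $\mathcal U$.

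Next I would set $w = \chi(t)u$, which satisfies $w\vert_{\ppp\OOO}=0$ and $\ppp_t^k w(\cdot,\pm T)=0$ for $k=0,1$ by \eqref{2.10}, and compute
\beq
\ppp_t^2 w - \mu\Delta w = \chi\!\left(\ppp_t^2 u - \mu\Delta u\right) + 2\chi'\ppp_t u + \chi'' u.
\eeq
Plugging in the expression above and applying Lemma~\ref{L1} to $w$ yields an estimate whose right--hand source is controlled by three pieces: the wanted $\int_Q|\rho\ppp_t^2 u - \ddd(p\nabla u)|^2 \weight$; a ``lower--order'' gradient term $C\int_Q \chi^2|\nabla u|^2 \weight$ from $\frac{1}{\rho}\nabla p\cdot\nabla u$; and a cut--off remainder $C\int_Q(|\chi'\ppp_t u|^2 + |\chi'' u|^2)\weight$.

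The cut--off remainder is handled by \eqref{2.9}: since $\chi',\chi''$ are supported where $\va\le d_0-\ep_0$, this integral is bounded by $C e^{2s(d_0-\ep_0)}\|u\|_{H^1(Q)}^2$, and the corresponding $\chi'u$ contribution from the $\omega$--term on the right of Lemma~\ref{L1} is absorbed into $Cs^3e^{2s(d_0-\ep_0)}\|u\|^2_{H^1(Q)}$. The main point, and the place where care is needed, is to convert the left--hand side from $w$ back to $u$ and then absorb the gradient term: on the set $\{\chi=1\}$ one has $w=u$, while on $\{\chi\ne 1\}$ one again uses \eqref{2.9} to bound the relevant integrals by $Ce^{2s(d_0-\ep_0)}\|u\|_{H^1(Q)}^2$. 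The leftover $C\int_Q |\nabla u|^2\weight$ splits the same way, and on $\{\chi=1\}$ it equals $C\int |\nabla w|^2\weight \le \frac{C}{s}\cdot s\int_Q|\nabla_{x,t}w|^2\weight$, which for $s$ large is absorbed into the left--hand side of the Carleman inequality for $w$. Combining these estimates and renaming constants gives \eqref{2.11}. The main obstacle is this absorption step: one must verify that the coefficient $C/s$ multiplying the left--hand gradient term can be made strictly less than the Carleman constant, which is exactly why the lemma is asserted only for $s>s_0$ with $s_0$ possibly larger than that of Lemma~\ref{L1}.
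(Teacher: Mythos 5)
Your proposal is correct and follows essentially the same route as the paper: both reduce to Lemma \ref{L1} for the operator $\ppp_t^2 - \frac{p}{\rho}\Delta$ applied to $\chi u$, treat the first-order term $\frac{1}{\rho}\nabla p\cdot\nabla(\chi u)$ and the commutator terms $2\chi'\ppp_t u + \chi'' u$ as part of the source, use \eqref{2.9} to bound everything supported where $\chi\ne 1$ by $Ce^{2s(d_0-\ep_0)}\Vert u\Vert^2_{H^1(Q)}$, and absorb the gradient contribution into the left-hand side for $s$ large. The only cosmetic difference is that the paper splits $u=\chi u+(1-\chi)u$ on the left-hand side at the outset rather than converting $w=\chi u$ back to $u$ at the end, and you are in fact slightly more explicit than the paper about verifying that $(\mu,1)=(p/\rho,1)$ satisfies the pseudoconvexity condition of $\mathcal{U}$ and about the absorption of the $C\int_Q|\nabla u|^2\weight\,dxdt$ term.
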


\begin{proof}
  We notice
$$
u = \chi u + (1-\chi)u.
$$
Then
\begin{align*}
&\int_Q (s\vert \nabla_{x,t}u\vert^2 + s^3\vert u\vert^2)\weight dxdt\\
\le& 2\int_Q (s\vert \nabla_{x,t}(\chi u) \vert^2
+  s^3\vert \chi u\vert^2)\weight dxdt
+ 2\int_Q (s\vert \nabla_{x,t}((1-\chi)u)\vert^2
+ s^3\vert (1-\chi) u\vert^2)\weight dxdt.
\end{align*}
Since the second term on the right-hand side does not vanish only if
$T-2\ep_1 \le \vert t\vert \le T$, that is,
only if $\va(x,t) \le d_0 - \ep_0$ by \eqref{2.9}, we obtain
$$
\int_Q (s\vert \nabla_{x,t}u\vert^2 + s^3\vert u\vert^2)\weight dxdt
$$
\bel{2.12}
\le 2\int_Q (s\vert \nabla_{x,t}(\chi u) \vert^2
+  s^3\vert \chi u\vert^2)\weight dxdt
+ Cs^3e^{2s(d_0-\ep_0)}\Vert u\Vert^2_{H^1(Q)}.
\ee
On the other hand, we have
$$\left\{ \begin{array}{rl}
& \ppp_t^2(\chi u)(x,t) = \frac{p}{\rho}\Delta (\chi u)+\frac{\chi}{\rho}(\rho\ppp_t^2u - \ddd (p\nabla u))
+ \frac{\nabla p}{\rho}\cdot \nabla (\chi u)
+ 2\chi' \ppp_tu + \chi'' u \quad \mbox{in $Q$}, \\
& \chi u\vert_{\ppp\OOO} = 0, \\
& \ppp_t^k(\chi u)(\cdot, \pm T) = 0 \quad \mbox{in $\OOO$, $k=0,1$.}\\
\end{array}\right.
$$
Therefore, applying Lemma \ref{L1} to $\left(\ppp_t^2 - \frac{p}{\rho}\Delta\right)
(\chi u)$ by regarding $\frac{\chi}{\rho}(\rho\ppp_t^2u - \ddd (p\nabla u))$
 $+\frac{\nabla p}{\rho}\cdot \nabla (\chi u) + 2\chi' \ppp_tu + \chi'' u$
as non-homogeneous term, and choosing $s>0$ sufficiently large, we obtain
\begin{align*}
&\int_Q (s\vert \nabla_{x,t}(\chi u) \vert^2
+  s^3\vert \chi u\vert^2)\weight dxdt\\
\le & C\int_Q \vert \frac{\chi}{\rho}(\rho\ppp_t^2u - \ddd (p\nabla u))\vert^2
\weight dxdt\\
+ & C\int_Q \vert 2\chi'\ppp_tu + \chi'' u\vert^2 \weight dxdt
+ C\int^T_{-T}\int_{\omega}
(s\vert \ppp_t(\chi u)\vert^2 + s^3\vert \chi u\vert^2)\weight dxdt\\
\le& C\int_Q \vert \rho\ppp_t^2u - \ddd (p\nabla u)\vert^2\weight dxdt \\
+ & Ce^{2s(d_0-\ep_0)}\Vert u\Vert^2_{H^1(Q)}
+ C\int^T_{-T}\int_{\omega}
(s\vert \ppp_t u\vert^2 + s^3\vert u\vert^2)\weight dxdt.
\end{align*}
At the last inequality, we used the same argument as the second term on the
right-hand side of \eqref{2.12}. Substituting this in the first term on the
right-hand side of \eqref{2.12}, we complete the proof of Lemma \ref{L2}.

\end{proof}

We conclude this section with a Carleman estimate for a first-order
partial differential equation.
\\
\begin{lemma}\label{L3}
Let $A \in (C^1(\ooo{\OOO}))^n$ and $B \in C^1(\ooo{\OOO})$, and let
$$
Qf := A(x)\cdot \nabla f(x) + B(x)f, \quad f\in H^1(\OOO).
$$
We assume
\bel{2.13}
(A(x) \cdot (x-x_0)) \ne 0, \quad x \in \ooo{\OOO}.  
\ee
Then there exist constants $s_0>0$ and $C>0$ such that
\bel{2.14}
\int_{\OOO} s^2\vert f\vert^2 e^{2s\va(x,0)} dx
\le C\int_{\OOO} \vert Qf\vert^2 e^{2s\va(x,0)} dx  
\ee
for $s>s_0$ and $f \in H^1_0(\Omega)$ and
\bel{2.15}
\int_{\OOO} s^2(\vert f\vert^2 + \vert \nabla f\vert^2)e^{2s\va(x,0)} dx
\le C\int_{\OOO} (\vert Qf\vert^2 + \vert \nabla (Qf)\vert^2)
e^{2s\va(x,0)} dx                         
\ee
for $s>s_0$ and $f \in H^2_0(\Omega)$.
\end{lemma}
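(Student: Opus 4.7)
The plan is to prove (2.14) by the standard conjugation method for first-order Carleman estimates, and then derive (2.15) by differentiating the equation $Qf = A\cdot \nabla f + Bf$ component-wise and reapplying (2.14). The weight to analyze is $\varphi(x,0) = e^{\lambda|x-x_0|^2}$, whose gradient is $\nabla\varphi(x,0) = 2\lambda\varphi(x,0)(x-x_0)$. Thanks to assumption \eqref{2.13} and the compactness of $\overline{\Omega}$, there exists $c_0>0$ with $|A(x)\cdot(x-x_0)|\ge c_0$ on $\overline{\Omega}$, hence
\[
|A(x)\cdot \nabla\varphi(x,0)| \ge 2\lambda c_0 \varphi(x,0) \ge 2\lambda c_0 \qquad (x\in\overline{\Omega}).
\]
This lower bound is the mechanism that provides the $s^2$-gain.

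To prove \eqref{2.14}, I introduce $g = e^{s\varphi(x,0)} f$. For $f\in H^1_0(\Omega)$ we still have $g\in H^1_0(\Omega)$, and a direct computation gives
\[
e^{s\varphi(x,0)} Qf \;=\; A\cdot \nabla g + \bigl(B - s\,A\cdot \nabla\varphi(x,0)\bigr) g.
\]
Squaring and integrating on $\Omega$, the expansion has three pieces: the nonnegative term $\int|A\cdot\nabla g|^2 dx$; a cross term $2\int (A\cdot\nabla g)(B-sA\cdot\nabla\varphi)g\,dx$, which I rewrite as $\int (B-sA\cdot\nabla\varphi)A\cdot\nabla(g^2)\,dx$ and integrate by parts (using $g\in H^1_0$); and the zeroth-order term $\int (B-sA\cdot\nabla\varphi)^2 g^2\,dx$. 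The integration by parts in the cross term produces a factor $\nabla\cdot[(B-sA\cdot\nabla\varphi)A]$, which is $O(s)$, so its contribution is controlled by $Cs\int g^2 dx$. The zeroth-order term, by the key pointwise lower bound on $A\cdot\nabla\varphi(x,0)$, dominates by $cs^2\int g^2 dx - C\int g^2 dx$. Combining these, for $s$ sufficiently large
\[
\int_{\Omega} |e^{s\varphi(x,0)} Qf|^2\,dx \;\ge\; \tfrac{c}{2}\, s^2 \int_{\Omega} g^2\,dx,
\]
and undoing the substitution $g=e^{s\varphi(x,0)}f$ yields \eqref{2.14}.

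For \eqref{2.15}, I apply \eqref{2.14} to each partial derivative $\partial_k f$, which belongs to $H^1_0(\Omega)$ because $f\in H^2_0(\Omega)$. Differentiating the identity $Qf = A\cdot\nabla f + Bf$ in $x_k$ gives
\[
Q(\partial_k f) \;=\; \partial_k(Qf) - (\partial_k A)\cdot \nabla f - (\partial_k B) f,
\]
so \eqref{2.14} applied to $\partial_k f$ and summation over $k$ produce
\[
\int_{\Omega} s^2 |\nabla f|^2 e^{2s\varphi(x,0)} dx \;\le\; C\!\int_{\Omega}\!|\nabla(Qf)|^2 e^{2s\varphi(x,0)} dx + C\!\int_{\Omega}\!(|\nabla f|^2+|f|^2)e^{2s\varphi(x,0)} dx.
\]
For $s$ large, the $|\nabla f|^2$ term on the right is absorbed into the left-hand side; the $|f|^2$ term on the right is absorbed by bounding it with \eqref{2.14}, which costs only $C/s^2$ times $\int|Qf|^2 e^{2s\varphi(x,0)} dx$. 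Adding back \eqref{2.14} for $f$ itself completes \eqref{2.15}.

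The only delicate point is bookkeeping: one must verify that every term arising from the integration by parts is genuinely $O(s)$ (not $O(s^2)$), since $A\cdot\nabla\varphi(x,0)$ is $C^1$ in $x$ with $\lambda$ fixed. Once that is checked, the Carleman estimate is driven purely by the nondegeneracy $|A\cdot(x-x_0)|\ge c_0$ and the exponential weight.
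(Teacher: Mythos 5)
Your proof is correct, and it is precisely the ``direct integration by parts'' argument that the paper itself invokes (the paper gives no proof of Lemma \ref{L3}, only the remark that it follows by integration by parts as in Lemma 2.4 of Bellassoued--Imanuvilov--Yamamoto): the conjugation $g=e^{s\va(x,0)}f$, the lower bound $\vert A\cdot\nabla\va(x,0)\vert\ge 2\lambda c_0\va(x,0)$ coming from \eqref{2.13} and compactness, the $O(s)$ divergence term from the cross product, and the differentiation of $Qf$ to get \eqref{2.15} from \eqref{2.14} are all exactly the standard route. No gaps.
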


The proof can be done directly by integration by parts, and we refer for
example to Lemma 2.4 in Bellassoued, Imanuvilov and Yamamoto \cite{BIY}.
\section{Proofs of Theorems \ref{T1} and \ref{T2}}\label{S3}
\subsection{Proof of Theorem \ref{T1}}

We divide the proof into three steps.
The argument in Second Step is a simplification of the corresponding
part in \cite{IY3}, while the energy estimate \eqref{3.16} in
Third Step modifies the argument towards the Lipschitz stability in
\cite{KY}.

{\bf First Step: Even extension in $t$.}

We set
$$
y(a)(x,t) = u(p,\rho,a,h)(x,t) - u(q,\sigma,a,h)(x,t),
\quad R(x,t) = u(q,\sigma,a,h)(x,t),
$$ and we write $y$ in place of $y(a)$. We define
\bel{3.1}
f(x) = p(x) - q(x), \quad g(x) = \rho(x) - \sigma(x), \qquad
x \in\OOO, \thinspace 0<t<T.                 
\ee
Then we have
\bel{3.2}
\rho\ppp_t^2y(x,t) - \ddd (p(x)\nabla y(x,t))
= \ddd (f(x)\nabla R) - g\ppp_t^2R(x,t) \quad \mbox{in $\OOO\times (0,T)$},
\ee
and
\bel{3.3}
y(x,0) = \ppp_ty(x,0) = 0, \quad x \in \OOO, \quad y\vert_{\ppp\OOO}
= 0.                                     
\ee
We take the even extensions of the functions $R(x,t)$, $y(x,t)$ on
$t\in (-T,0)$.  For simplicity, we denote the extended functions by
the same notations $R(x,t), y(x,t)$.  Since $y \in
W^{4,\infty}(\OOO\times (0,T))$, $y(\cdot,0) = \ppp_ty(\cdot,0) = 0$
and $\ppp_t\nabla R(\cdot,0) = 0$
by $\ppp_tu(q,\sigma,a,h)(\cdot,0) = 0$ in $\OOO$, we see
that $(\ppp_t^3R)(\cdot,0) = (\ppp_t^3y)(\cdot,0) = 0$
in $\OOO$, and so $R\in W^{4,\infty}(Q)$,
$$
y \in W^{4,\infty}(Q)
$$
and
\bel{3.4}
\left\{ \begin{array}{rl}
& \rho\ppp_t^2y(x,t) - \ddd (p(x)\nabla y(x,t))
= \ddd (f(x)\nabla R) - g\ppp_t^2R(x,t) \quad \mbox{in $Q$},\\
&y(x,0) = \ppp_ty(x,0) = 0, \quad x \in \OOO, \\
&y = 0 \quad \mbox{on $\ppp\OOO \times (-T,T)$}.
\end{array}\right.                       
\ee

We set
\bel{3.5}
y_1 = y_1(a) = \ppp_ty(a), \quad y_2 = y_2(a) = \ppp_t^2y(a).
\ee
Henceforth we write $y_1$ and $y_2$ in place of $y_1(a)$ and
$y_2(a)$ when there is no fear of confusion.
Then
$$
\ppp_t^2R(x,0) = \ppp_t^2u(q,\sigma,a,h)(x,0)
= \frac{1}{\sigma}\ddd(q(x)\nabla u(q,\sigma,a,h))\vert_{t=0}
= \frac{\ddd (q\nabla a)}{\sigma}
$$
and $\ppp_ty_2(x,0) = \ppp_t^3y(x,0) = 0$ for $x \in \OOO$, because
we can differentiate the first equation in \eqref{3.4} and substitute
$t=0$ in terms of $y \in W^{4,\infty}(Q)$.  Hence we have
\bel{3.6}
\left\{ \begin{array}{rl}
& \rho\ppp_t^2y_1(x,t) - \ddd (p(x)\nabla y_1(x,t))
= \ddd (f(x)\nabla \ppp_tR) - g\ppp_t^3R =: G_1 \quad \mbox{in $Q$},\\
&y_1(x,0) = 0, \\
& \ppp_ty_1(x,0) = \frac{1}{\rho}\ddd(f\nabla a)
- g\frac{\ddd (q\nabla a)}{\rho\sigma}, \\
&y_1 = 0 \quad \mbox{on $\ppp\OOO \times (-T,T)$}
\end{array}\right.
\ee
and
\bel{3.7}
\left\{ \begin{array}{rl}
& \rho\ppp_t^2y_2(x,t) - \ddd (p(x)\nabla y_2(x,t))
= \ddd (f(x)\nabla \ppp_t^2R) - g\ppp_t^4R =: G_2 \quad \mbox{in $Q$},\\
&y_2(x,0) = \frac{1}{\rho}\ddd(f\nabla a)
- g\frac{\ddd (q\nabla a)}{\rho\sigma}, \\
&\ppp_ty_2(x,0) = 0, \quad x \in \OOO, \\
&y_2 = 0 \quad \mbox{on $\ppp\OOO \times (-T,T)$}.
\end{array}\right.
\ee

{\bf Second Step: weighted energy estimate and Carleman estimate.}

Let $k=1,2$. First, by multiplying the first equations in \eqref{3.6} and
\eqref{3.7} by $2\ppp_ty_k$, we can readily see
\bel{3.8}
\ppp_t(\rho\vert \ppp_ty_k\vert^2 + p\vert \nabla y_k\vert^2)
- \ddd (2p(\ppp_ty_k)\nabla y_k)
= 2(\ppp_ty_k)G_k \quad \mbox{in $Q$}.    
\ee
Multiplying \eqref{3.8} by $\chi(t)\weight$ and integrating by parts over
$\OOO \times (-T,0)$, we have
$$
\int^0_{-T} \int_{\OOO} \{\chi \weight
\ppp_t(\rho\vert \ppp_ty_k\vert^2)
+ \chi \weight \ppp_t(p\vert \nabla y_k\vert^2) \}dxdt   
$$
\bel{3.9}
- \int^0_{-T} \int_{\omega} \chi \weight \ddd (2p(\ppp_ty_k)\nabla y_k)
dxdt
= \int^0_{-T}\int_{\OOO} \chi \weight G_k 2(\ppp_ty_k) dxdt.
\ee
For $k=2$, by $y_2\vert_{\ppp\OOO} = 0$, $\chi(-T) = 0$ and the initial
condition of $y_2$, we have
\begin{align*}
& \mbox{[the left-hand side of \eqref{3.9}]} \\
= & \int_{\OOO} [\chi \weight \rho\vert \ppp_ty_2\vert^2]
^{t=0}_{t=-T} dx
- \int^0_{-T}\int_{\OOO} (\chi' + 2s\chi\ppp_t\va)\rho
\vert \ppp_ty_2\vert^2 \weight dxdt\\
+& \int_{\OOO} [\chi\weight p\vert \nabla y_2\vert^2]^{t=0}_{t=-T} dx
- \int^0_{-T}\int_{\OOO} (\chi' + 2s\chi\ppp_t\va)p\vert \nabla y_2\vert^2
\weight dxdt\\
+& \int^0_{-T}\int_{\OOO} 2s\chi(\nabla \va \cdot \nabla y_2)
2p(\ppp_ty_2)\weight dxdt\\
\ge &\int_{\OOO} p\vert \nabla y_2(x,0)\vert^2 e^{2s\va(x,0)} dx
- C\int_Q s\vert \nabla_{x,t}y_2\vert^2 \weight dxdt.
\end{align*}
Here we augmented the integral over $\OOO \times (-T,0)$ to $Q:=
\OOO \times (-T,T)$, and used $\vert \chi' + 2s \chi \ppp_t\va\vert \le Cs$ in
$Q$ and
$$
\vert 2s\chi (\nabla \va \cdot \nabla y_2)\ppp_ty_2\vert
\le Cs\vert \nabla y_2\vert \vert \ppp_ty_2\vert
\le Cs\vert \nabla_{x,t}y_2\vert^2 \quad \mbox{in $Q$}.
$$
Moreover
\bel{3.10}
\mbox{[the right-hand side of \eqref{3.9}]}
\le C\int_Q \vert G_2\vert^2 \weight dxdt
+ C\int_Q s\vert \ppp_ty_2\vert^2 \weight dxdt. 
\ee
Therefore \eqref{3.9} and \eqref{3.10} yield
\bel{3.11}
\int_{\OOO} \vert \nabla y_2(x,0)\vert^2 e^{2s\va(x,0)} dx
\le C\int_Q \vert G_2\vert^2 \weight dxdt
+ C\int_Q s\vert \nabla_{x,t}y_2\vert^2 \weight dxdt.    
\ee

Applying Lemma \ref{L2} to \eqref{3.7} and substituting it into \eqref{3.11},
we obtain
\bel{3.12}
\int_{\OOO} \vert \nabla y_2(x,0)\vert^2 e^{2s\va(x,0)} dx
\le C\int_Q \vert G_2\vert^2 \weight dxdt
+ Cs^3e^{2s(d_0-\ep_0)}\Vert y_2\Vert^2_{H^1(Q)}
+ CD_2^2                         
\ee
for $s \ge s_0$.  Here and henceforth we set
\bel{3.13}
D_k^2 := s^3e^{Cs}\Vert y_k\Vert^2_{H^1(-T,T;L^2(\omega))}, \quad
k=1,2.                               
\ee
For $k=1$, we can similarly argue to have
$$
\int_{\OOO} \vert y_2(x,0)\vert^2 e^{2s\va(x,0)} dx
= \int_{\OOO} \vert \ppp_ty_1(x,0)\vert^2 e^{2s\va(x,0)}dx
$$
\bel{3.14}
\le C\int_Q \vert G_1\vert^2 \weight dxdt
+ Cs^3e^{2s(d_0-\ep_0)}\Vert y_1\Vert^2_{H^1(Q)}
+ CD_1^2                         
\ee
Hence \eqref{3.12} and \eqref{3.14} imply
\bel{3.15}
\int_{\OOO} (\vert \nabla y_2(x,0)\vert^2 + \vert y_2(x,0)\vert^2)
e^{2s\va(x,0)} dx                      
\ee
\begin{align*}
\le& C\int_Q (\vert G_1\vert^2 + \vert G_2\vert^2) \weight dxdt
+ Cs^3e^{2s(d_0-\ep_0)}(\Vert y_1\Vert^2_{H^1(Q)}
+ \Vert y_2\Vert^2_{H^1(Q)})\\
+& C(D_1^2+D_2^2)
\end{align*}
for $s \ge s_0$.

{\bf Third Step: Energy estimate for $\Vert y_1\Vert^2_{H^1(Q)}$
and  $\Vert y_2\Vert^2_{H^1(Q)}$.}

Applying a usual energy estimate to \eqref{3.6} and \eqref{3.7}, in terms of the
Poincar\'e inequality, we have
\begin{align*}
& \int_{\OOO} (\vert \nabla_{x,t}y_k(x,t)\vert^2
+ \vert y_k(x,t)\vert^2) dx \\
\le& C \int_{\OOO} (\vert \nabla_{x,t} y_k(x,0)\vert^2+ \vert y_k(x,0)\vert^2) dx
+ C\int^T_{-T}\int_{\OOO} \vert G_k\vert^2  dxdt,\quad k=1, 2,
\end{align*}

for $-T \le t \le T$.  Consequently
\bel{3.16}
 \Vert y_k\Vert^2_{H^1(Q)}
\le C\int_{\OOO} (\vert \nabla_{x,t} y_k(x,0)\vert^2
+ \vert y_k(x,0)\vert^2) dx
+ C\int_Q \vert G_k\vert^2  dxdt,\quad k=1, 2.   
\ee

Substituting \eqref{3.16} in \eqref{3.15} and using $e^{2s\va} \ge 1$, we obtain
\begin{align*}
& \int_{\OOO} (\vert \nabla y_2(x,0)\vert^2 + \vert y_2(x,0) \vert^2)
e^{2s\va(x,0)} dx \\
\le& C\int_Q (\vert G_1\vert^2 + \vert G_2\vert^2) \weight dxdt
+ Cs^3e^{2s(d_0-\ep_0)}\int_{\OOO} (\vert \nabla y_2(x,0)\vert^2
+ \vert y_2(x,0) \vert^2) dx \\
+ & Cs^3e^{2s(d_0-\ep_0)}\int_Q (\vert G_1\vert^2 + \vert G_2\vert^2) dxdt
+ C(D_1^2 + D_2^2),
\end{align*}
that is,
\begin{align*}
& \int_{\OOO} (\vert \nabla y_2(x,0)\vert^2 + \vert y_2(x,0) \vert^2)
e^{2s\va(x,0)} (1-Cs^3e^{2s(d_0-\ep_0-\va(x,0)}) dx \\
\le& Cs^3e^{2s(d_0-\ep_0)}\int_Q (\vert G_1\vert^2
+ \vert G_2\vert^2) dxdt
+ C\int_Q (\vert G_1\vert^2 + \vert G_2\vert^2) \weight dxdt\\
+ & C(D_1^2 + D_2^2),
\end{align*}
By \eqref{2.8}, choosing $s>0$ sufficiently large, we have
$$
1-Cs^3e^{2s(d_0-\ep_0-\va(x,0)} \ge 1 - Cs^3e^{-4\ep_0 s}
\ge \frac{1}{2}.
$$
Hence
\begin{align*}
& \int_{\OOO} (\vert \nabla y_2(x,0)\vert^2 + \vert y_2(x,0) \vert^2)
e^{2s\va(x,0)} dx \\
\le& Cs^3e^{2s(d_0-\ep_0)}\int_Q (\vert G_1\vert^2
+ \vert G_2\vert^2) dxdt
+ C\int_Q (\vert G_1\vert^2 + \vert G_2\vert^2) \weight dxdt
+ C(D_1^2 + D_2^2)
\end{align*}
for all large $s>0$.  By the definitions of $G_1$ and $G_2$ in
\eqref{3.6} and \eqref{3.7}, we see that
$$
\sum_{k=1}^2 \vert G_k\vert^2
\le C(\vert \nabla f\vert^2 + \vert f\vert^2 + \vert g\vert^2)
\quad \mbox{in $Q$}.
$$
Consequently, recalling \eqref{3.5}: $y_1 = y_1(a)$ and $y_2 = y_2(a)$,
we obtain
\bel{3.17}
\int_{\OOO} \vert \nabla y_2(a)(x,0)\vert^2e^{2s\va(x,0)} dx
\ee
\begin{align*}
\le& Cs^3e^{2s(d_0-\ep_0)}\int_Q (\vert \nabla f\vert^2
+ \vert f\vert^2 + \vert g\vert^2) dxdt
+ C\int_Q (\vert \nabla f\vert^2 + \vert f\vert^2
+ \vert g\vert^2) \weight dxdt\\
+ &C(D_1^2 + D_2^2).
\end{align*}
Substituting \eqref{3.16} in \eqref{3.14},  we can similarly argue to have
\bel{3.18}
\int_{\OOO} \vert y_2(a)(x,0) \vert^2 e^{2s\va(x,0)} dx         
\ee
\begin{align*}
\le& Cs^3e^{2s(d_0-\ep_0)}\int_Q (\vert \nabla f\vert^2
+ \vert f\vert^2 + \vert g\vert^2) dxdt
+ C\int_Q (\vert \nabla f\vert^2 + \vert f\vert^2
+ \vert g\vert^2) \weight dxdt\\
+ &CD_1^2
\end{align*}
for all large $s>0$.

Setting $a=a_1, a_2$, by the initial condition in \eqref{3.7}, we see
\bel{3.19}
\rho y_2(a_{\ell})(x,0)
= \ddd (f\nabla a_{\ell}) - \frac{\ddd (q\nabla a_{\ell})}{\sigma}g,
\quad \ell = 1,2.                                  
\ee
Then, eliminating $g$ in the two equations in \eqref{3.19}, we obtain
\begin{align*}
& (\ddd (q\nabla a_2)\nabla a_1 - \ddd (q\nabla a_1)\nabla a_2)\cdot \nabla f
+ ((\ddd(q\nabla a_2)\Delta a_1 - (\ddd(q\nabla a_1)\Delta a_2)f\\
=& \rho\ddd(q\nabla a_2)y_2(a_1)(x,0)
-  \rho\ddd(q\nabla a_1)y_2(a_2)(x,0) \quad \mbox{in $Q$}.
\end{align*}
Applying \eqref{2.15} in Lemma \ref{L3} to this first-order equation in $f$, by the second condition in (1.9), we have
\bel{3.20}
s^2\int_{\OOO} (\vert \nabla f\vert^2 + \vert f\vert^2) e^{2s\va(x,0)} dx
\ee
\begin{align*}
\le & \int_{\OOO} \vert \ddd(q\nabla a_2)y_2(a_1)(x,0)
-  \ddd(q\nabla a_1)y_2(a_2)(x,0)\vert^2 e^{2s\va(x,0)} dx\\
+ &C\int_{\OOO} \vert \nabla(\ddd(q\nabla a_2)y_2(a_1)(x,0)
-  \ddd(q\nabla a_1)y_2(a_2)(x,0))\vert^2 e^{2s\va(x,0)} dx\\
\le& C\int_{\OOO} \left(
\sum_{\ell=1}^2 (\vert \nabla y_2(a_{\ell})(x,0)\vert^2
+ \vert y_2(a_{\ell})(x,0)\vert^2\right) e^{2s\va(x,0)} dx.
\end{align*}
Moreover, assuming that the first condition in \eqref{1.8} holds with $\ell=1$
for example, we have
$$
g = \frac{\sigma}{\ddd (q\nabla a_1)}(\ddd (f\nabla a_1)
- \rho y_2(a_1)(x,0))   \quad \mbox{on $\ooo{\OOO}$},
$$
and so
$$
\vert g(x)\vert \le C(\vert \nabla f(x)\vert + \vert f(x)\vert
+ \vert y_2(a_1)(x,0)\vert), \quad x \in \ooo{\OOO}.
$$
Hence, applying \eqref{3.20} and \eqref{3.17}-\eqref{3.18} for $y_2(a_1)(x,0)$ and
$y_2(a_2)(x,0)$, we obtain
\bel{3.21}
\int_{\OOO} (\vert \nabla f\vert^2 + \vert f\vert^2 + \vert g\vert^2)
e^{2s\va(x,0)} dx                         
\ee
\begin{align*}
\le & Cs^3e^{2s(d_0-\ep_0)}
\int_{\OOO} (\vert \nabla f\vert^2 + \vert f\vert^2 + \vert g\vert^2) dx\\
+& C\int_Q (\vert \nabla f\vert^2 + \vert f\vert^2 + \vert g\vert^2)
\weight dxdt + C\www{D}^2.
\end{align*}
Here we used $\vert y_k(x,-t)\vert = \vert y_k(x,t)\vert$, $k=1,2$ which is
seen by the even extension of $y(\cdot,t)$ in $t$, and recall \eqref{3.13}, and
we set
\bel{3.22}
\www{D}^2 := \sum_{\ell=1}^2 \Vert u(p,\rho,a_{\ell},h_{\ell})
- u(q,\sigma,a_{\ell},h_{\ell})\Vert^2_{H^3(0,T;L^2(\omega))}.
\ee
We will estimate the second term on the right-hand side of \eqref{3.21} as follows.
\begin{align*}
& \int_Q (\vert \nabla f\vert^2 + \vert f\vert^2 + \vert g\vert^2)
\weight dxdt\\
=& \int_{\OOO} (\vert \nabla f\vert^2 + \vert f\vert^2 + \vert g\vert^2)
e^{2s\va(x,0)} \left( \int^T_{-T}
e^{2s(\va(x,t) - \va(x,0))} dt \right) dx.
\end{align*}
Since
\begin{align*}
& \va(x,t) - \va(x,0) = e^{\lambda\vert x-x_0\vert^2}
(e^{-\lambda \beta t^2} - 1)\\
\le& -e^{\lambda\min_{x\in\ooo{\OOO}}\vert x-x_0\vert^2}
(1-e^{-\lambda\beta t^2})
\le -C_0(1-e^{-\lambda\beta t^2})  \quad \mbox{in $Q$},
\end{align*}
we have
$$
\int^T_{-T} e^{2s(\va(x,t) - \va(x,0))} dt
\le \int^T_{-T} \exp(-2sC_0(1-e^{-\lambda\beta t^2})) dt
= o(1)
$$
as $s \to \infty$, where we used the Lebesgue convergence theorem.
Therefore
$$
\int_Q (\vert \nabla f\vert^2 + \vert f\vert^2 + \vert g\vert^2)
\weight dxdt
\le o(1) \int_{\OOO} (\vert \nabla f\vert^2 + \vert f\vert^2 + \vert g\vert^2)
e^{2s\va(x,0)} dx
$$
as $s \to \infty$, and choosing $s>0$ sufficiently large, we can
absorb the second term on the right-hand side of \eqref{3.21} into the left-hand side.
By \eqref{2.8}, we have $e^{2s\va(x,0)} \ge e^{2s(d_0+\ep_0)}$,
so that from \eqref{3.21} we obtain
\begin{align*}
&e^{2s(d_0+\ep_0)}
\int_{\OOO} (\vert \nabla f\vert^2 + \vert f\vert^2 + \vert g\vert^2) dx\\
\le &Cs^3e^{2s(d_0-\ep_0)}
\int_{\OOO} (\vert \nabla f\vert^2 + \vert f\vert^2 + \vert g\vert^2) dx
+ C\www{D}^2
\end{align*}
for all large $s>0$.  For large $s>0$, we see that
$e^{2s(d_0+\ep_0)} - Cs^3e^{2s(d_0-\ep_0)} > 0$.  Hence fixing such
$s>0$, we reach
\bel{3.23}
\int_{\OOO} (\vert \nabla f\vert^2 + \vert f\vert^2 + \vert g\vert^2)
e^{2s\va(x,0)} dx \le C\www{D}^2.                 
\ee
By the definition \eqref{3.22} of $\www{D}^2$, the proof of Theorem \ref{T1} is
completed.

\subsection{Proof of Theorem \ref{T2}.}
Again we set
\begin{align*}
& \rho y_2(a_{\ell})(x,0)
= \ddd (f\nabla a_{\ell}) - \frac{\ddd (q\nabla a_{\ell})}{\sigma}g\\
= &\sum_{k=1}^n (\ppp_ka_{\ell})\ppp_kf + (\Delta a_{\ell})f
- \frac{\ddd (q\nabla a_{\ell})}{\sigma}g, \quad \ell=1, ..., n+1.
\end{align*}
that is,
\bel{3.24}
\sum_{k=1}^n (\ppp_ka_{\ell})\ppp_kf - \frac{\ddd (q\nabla a_{\ell})}{\sigma}g
= \rho y_2(a_{\ell})(x,0) - (\Delta a_{\ell})f, \quad \ell=1, ..., n+1.
\ee
We rewrite \eqref{3.24} as a linear system with respect to $(n+1)$ unknowns
$\ppp_1f$, ..., $\ppp_nf$, $g$:
\begin{align*}
& \left( \begin{array}{cccc}
\ppp_1a_1 & \cdots & \ppp_na_1
& -\frac{1}{\sigma}\sum_{k=1}^n (\ppp_kq)\ppp_ka_1
- \frac{q\Delta a_1}{\sigma}\\
\vdots & \vdots & \vdots &\vdots \\
\ppp_1a_{n+1} & \cdots & \ppp_na_{n+1}
& -\frac{1}{\sigma}\sum_{k=1}^n (\ppp_kq)\ppp_ka_{n+1}
- \frac{q\Delta a_{n+1}}{\sigma}\\
\end{array}\right)
\left( \begin{array}{cc}
\ppp_1 f \\
\vdots \\
\ppp_nf\\
g  \\
\end{array}\right) \\
=& \left( \begin{array}{cc}
\rho y_2(a_1)(x,0) - (\Delta a_1)f\\
\vdots\\
\rho y_2(a_{n+1})(x,0) - (\Delta a_{n+1})f\\
\end{array}\right).
\end{align*}
In the coefficient matrix, multiplying the $j$-th column by
$\frac{1}{\sigma}\ppp_jq$, $j=1, 2, ..., n$ and adding them to
the $(n+1)$-th column, we obtain
\begin{align*}
& \mbox{[the determinant of the coefficient matrix]}\\
=& \mbox{det}\thinspace
   \left( \begin{array}{cccc}
\ppp_1a_1 & \cdots & \ppp_na_1 & - \frac{q\Delta a_1}{\sigma}\\
\vdots & \vdots & \vdots &\vdots \\
\ppp_1a_{n+1} & \cdots & \ppp_na_{n+1} & - \frac{q\Delta a_{n+1}}{\sigma}\\
\end{array}\right)\\
=& -\frac{q}{\sigma} \mbox{det}\thinspace
   \left( \begin{array}{cccc}
\ppp_1a_1 & \cdots & \ppp_na_1 & \Delta a_1\\
\vdots & \vdots & \vdots &\vdots \\
\ppp_1a_{n+1} & \cdots & \ppp_na_{n+1} & \Delta a_{n+1}\\
\end{array}\right) \quad \mbox{on $\ooo{\OOO}$}.
\end{align*}
Therefore by the assumption \eqref{1.13}, there exists a constant $C>0$,
independent of choices of $(p,\rho)$ and $(q,\sigma)$, such that
$$
\vert \nabla f(x)\vert^2 + \vert g(x)\vert^2
\le C\left(\sum_{\ell=1}^{n+1} \vert \rho y_2(a_{\ell})(x,0)\vert^2
+ \vert f(x)\vert^2\right), \quad x \in \ooo{\OOO},
$$
and so
\bel{3.25}
\int_{\OOO} (\vert \nabla f\vert^2 + \vert g\vert^2)
e^{2s\va(x,0)} dx
\le C\int_{\OOO} \sum_{\ell=1}^{n+1} \vert y_2(a_{\ell})(x,0)\vert^2
e^{2s\va(x,0)} dx
+ \int_{\OOO} \vert f(x)\vert^2 e^{2s\va(x,0)} dx.     
\ee

We consider a first-order partial differential operator:
\bel{3.26}
(Q_0f)(x) = (x-x_0)\cdot \nabla f(x), \quad x \in \OOO. 
\ee
By $x_0 \not\in \ooo{\OOO}$, the condition (2.13) is satisfied, and
\eqref{2.14} in Lemma \ref{L3} yields
\begin{align*}
& s^2\int_{\OOO} \vert f(x)\vert^2 e^{2s\va(x,0)} dx
\le C\int_{\OOO} \vert ((x-x_0)\cdot \nabla f(x)\vert^2 e^{2s\va(x,0)} dx\\
\le &  C\int_{\OOO} \vert \nabla f(x)\vert^2 e^{2s\va(x,0)} dx
\end{align*}
for all large $s>0$.  Therefore
$$
\int_{\OOO} \vert f(x)\vert^2 e^{2s\va(x,0)} dx
\le \frac{C}{s^2}\int_{\OOO} \vert \nabla f(x)\vert^2 e^{2s\va(x,0)} dx
$$
for all large $s>0$.
Substituting this inequality into the second term on the right-hand side of
\eqref{3.25} and absorbing into the left-hand side by choosing $s>0$ large,
in terms of \eqref{3.18} with $y_2(a_{\ell})$, $\ell=1,2,..., n+1$,
\begin{align*}
& \int_{\OOO} (\vert \nabla f\vert^2 + \vert f\vert^2 + \vert g\vert^2)
e^{2s\va(x,0)} dxdt
\le C\int_{\OOO} \sum_{\ell=1}^{n+1} \vert y_2(a_{\ell})(x,0)\vert^2
e^{2s\va(x,0)} dx \\
\le& Cs^3e^{2s(d_0-\ep_0)}
\int_{\OOO} (\vert \nabla f\vert^2 + \vert f\vert^2 + \vert g\vert^2)
e^{2s\va(x,0)} dxdt\\
+ &C\int_Q (\vert \nabla f\vert^2 + \vert f\vert^2 + \vert g\vert^2)
\weight dxdt
+ C\sum_{\ell=1}^{n+1} s^3e^{Cs}\Vert y_1(a_{\ell})\Vert^2
_{H^1(-T,T;L^2(\omega))}
\end{align*}
for all large $s>0$.  Similarly to \eqref{3.23}, we can absorb the first and the
second terms on the right-hand side into the left-hand side, so that
we can complete the proof of Theorem \ref{T2}.

 \begin{figure}[tbp]
 \begin{center}
 \begin{tabular}{cc}
{\includegraphics[scale=0.25, clip=]{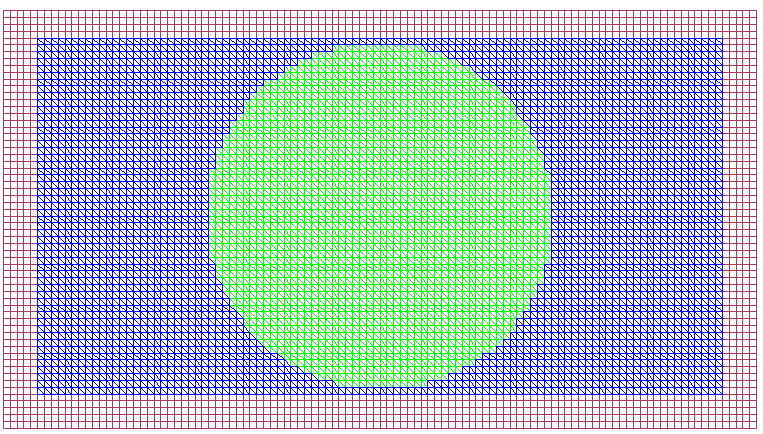}} &
\begin{tikzpicture}[scale=0.35,x=0.74cm,y=0cm]
  \node[anchor=north west,inner sep=0pt] at (0,0) {\includegraphics[scale=0.25, clip=]{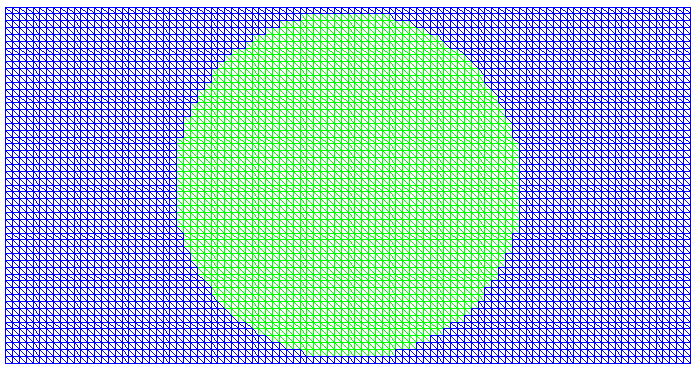}};
\draw (20,-75pt) node[anchor=north] {$\Omega_2$};
\draw (12,-75pt) node[anchor= north] {$\Omega_1$};
\end{tikzpicture}
\\
a) $\Omega$ & b)   $\Omega_{FEM} = \Omega_1 \cup \Omega_2$
 \end{tabular}
 \end{center}
 \caption{ \emph{a) Computational  mesh  used in the domain
     decomposition of the domain $\Omega =
\Omega_{FEM} \cup \Omega_{FDM}$. b) The finite element
     mesh in $\Omega_{FEM} = \Omega_1 \cup \Omega_2$.}}
 \label{fig:0_1}
 \end{figure}

\begin{figure}
 \begin{center}
   \begin{tabular}{|c|c|c|c|}
       \hline
     \multicolumn{4}{|c|}
                 { Exact $\rho(x)$}
                 \\
                 \hline
                  Test 1 &   Test 2 &   Test 3 &   Test 4 \\
                 \hline
{\includegraphics[scale=0.19, clip=true,]{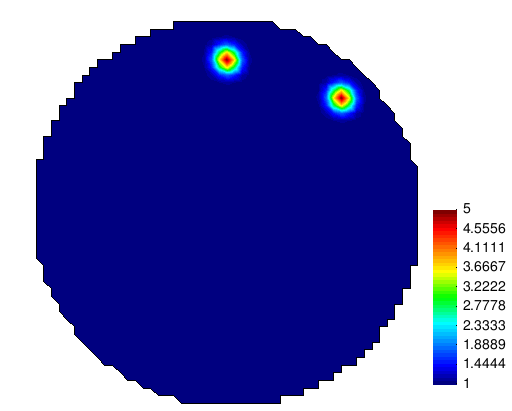}} &
{\includegraphics[scale=0.19,  clip=true,]{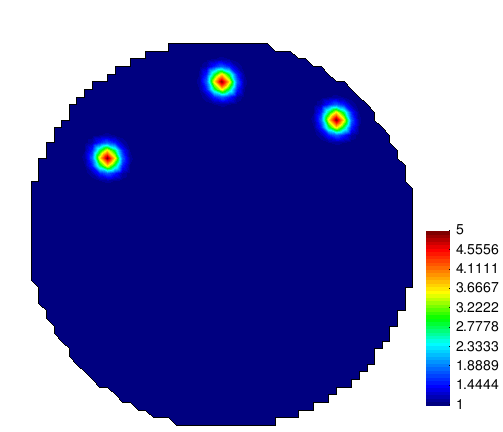}} &
 {\includegraphics[scale=0.19, clip=true,]{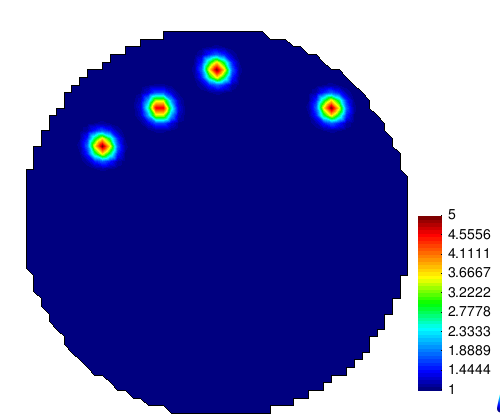}} &
 {\includegraphics[scale=0.19,  clip=true,]{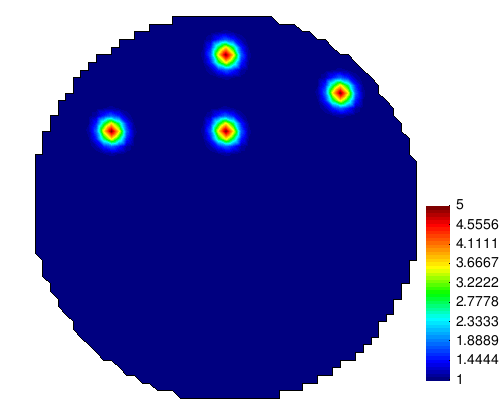}}
 \\
   \hline
     \multicolumn{4}{|c|}
                 {Exact $p(x)$}
                 \\
                 \hline
    Test 1 &   Test 2 &   Test 3 &   Test 4 \\
                 \hline
   {\includegraphics[scale=0.19, clip=true,]{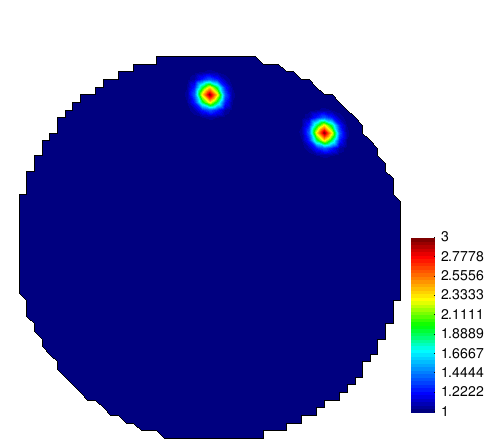}} &
{\includegraphics[scale=0.19,  clip=true,]{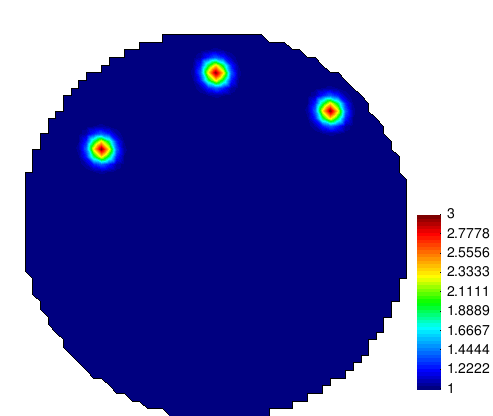}} &
 {\includegraphics[scale=0.19, clip=true,]{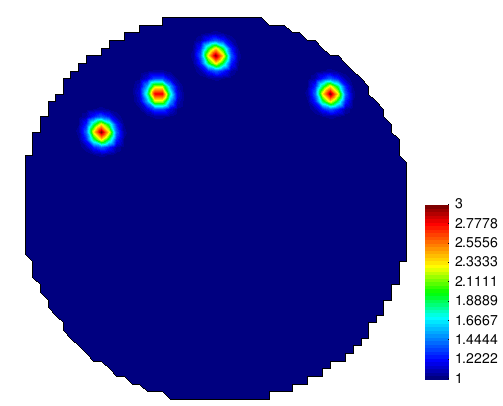}} &
 {\includegraphics[scale=0.19,  clip=true,]{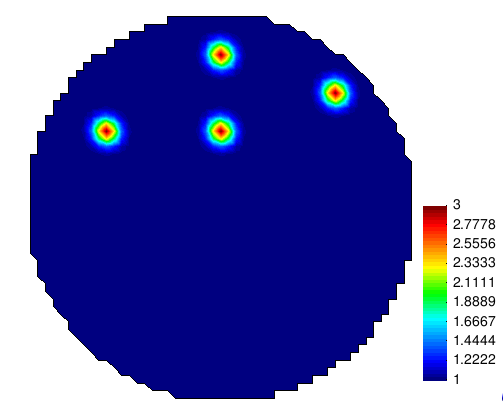}} \\
 \hline
\end{tabular}
 \end{center}
 \caption{ \protect\small \emph{Exact Gaussian functions $\rho(x)$ and
     $p(x)$ in $\Omega_1$ in different tests. }}
 \label{fig:exact_gaussians}
 \end{figure}

 \begin{figure}
 \begin{center}
   \begin{tabular}{|c|c|c|c|}
     \hline
     \multicolumn{4}{|c|}
                 {Test 1}
                 \\
   \hline
  $\rho(x), \delta=3\%$  & $ \rho(x), \delta=10\%$  &  $p(x), \delta= 3\%$ &  $p(x), \delta= 10\%$ \\
\hline
    {\includegraphics[scale=0.2, clip=true,]{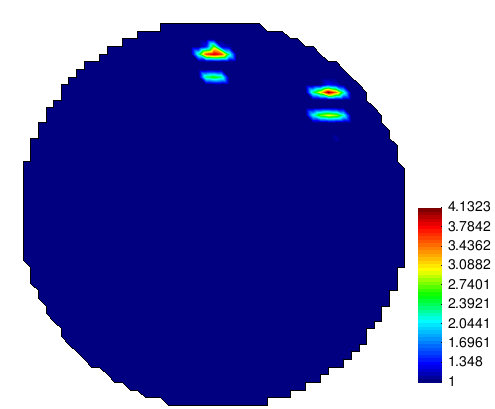}} &
    {\includegraphics[scale=0.2, clip=true,]{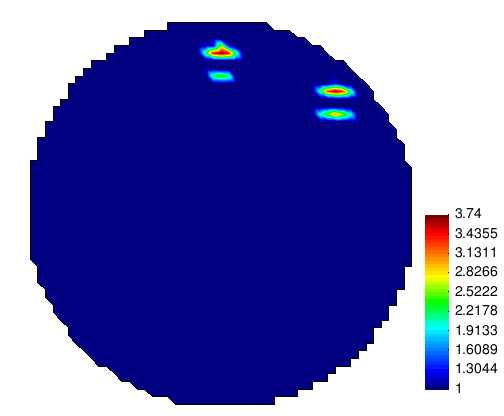}} &
    {\includegraphics[scale=0.2,  clip=true,]{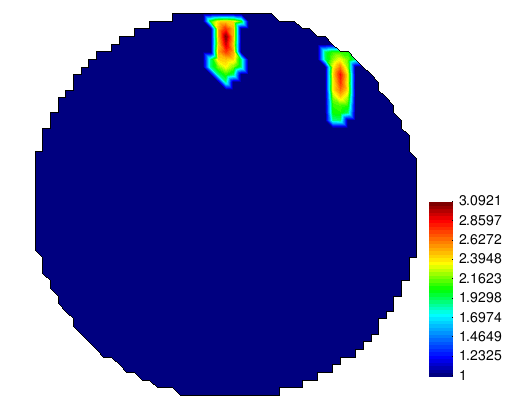}} &
     {\includegraphics[scale=0.2,  clip=true,]{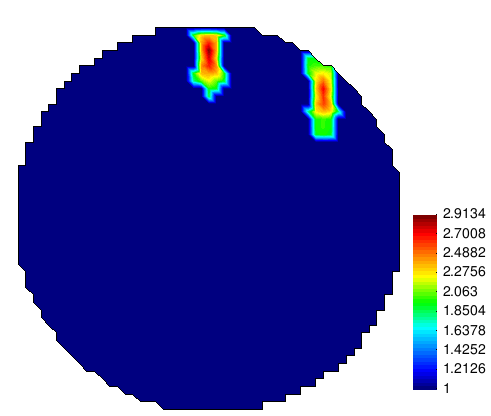}}
   \\
   \hline
    \multicolumn{4}{|c|}
                 {Test 2}
                 \\
                 \hline
                     {\includegraphics[scale=0.2, clip=true,]{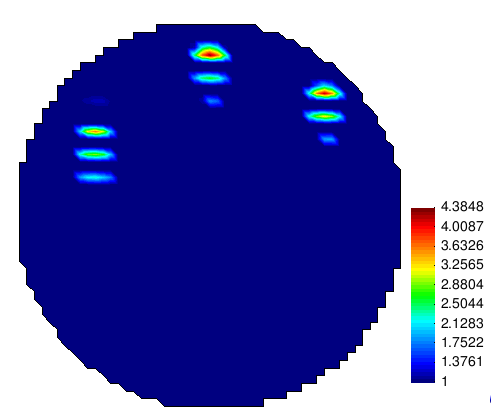}} &
                     {\includegraphics[scale=0.2, clip=true,]{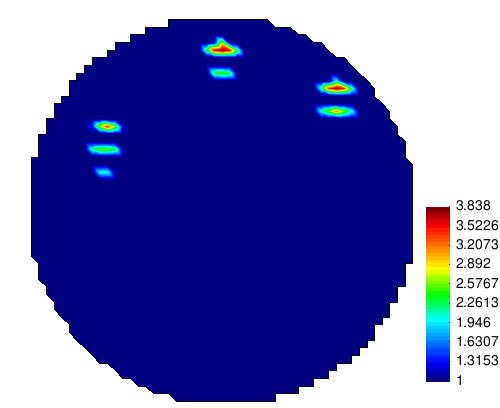}} &
                     {\includegraphics[scale=0.2,  clip=true,]{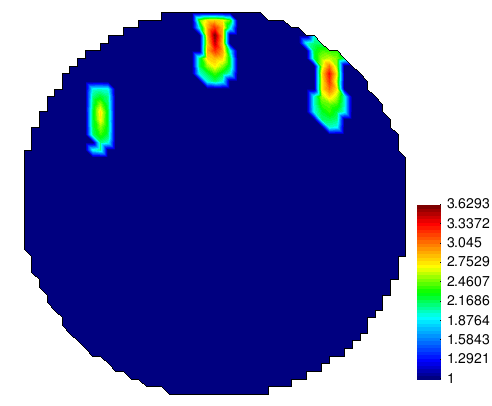}} &
                     {\includegraphics[scale=0.2,  clip=true,]{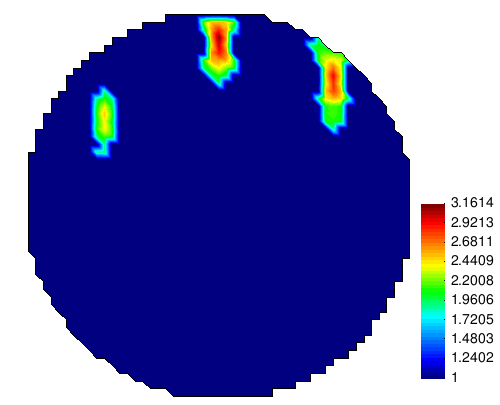}}
   \\
   \hline
   \multicolumn{4}{|c|}
                 {Test 3}
                 \\
                 \hline
      {\includegraphics[scale=0.2, clip=true,]{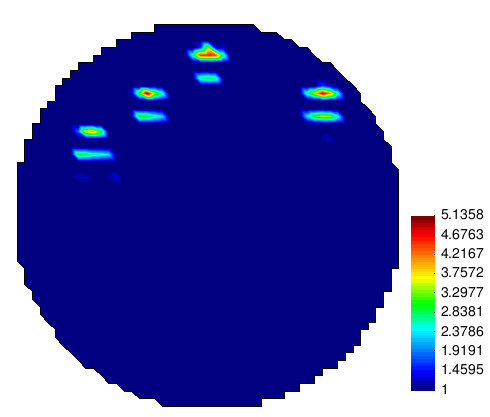}} &
      {\includegraphics[scale=0.2, clip=true,]{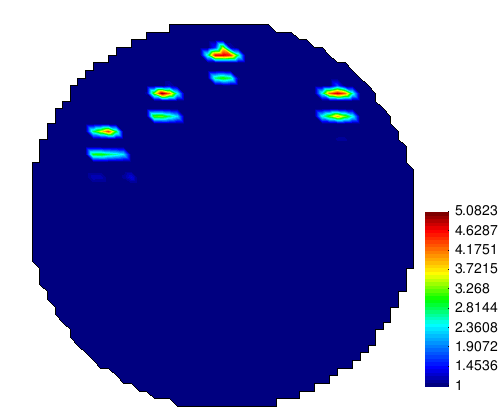}} &
      {\includegraphics[scale=0.2,  clip=true,]{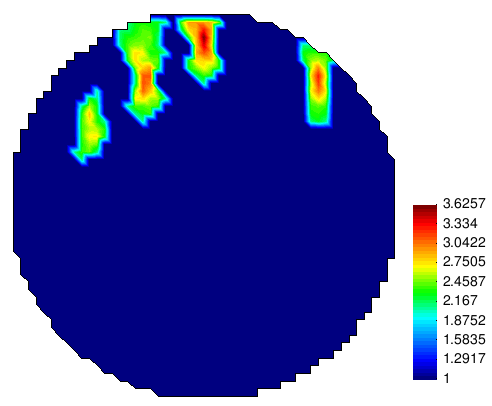}} &
      {\includegraphics[scale=0.2,  clip=true,]{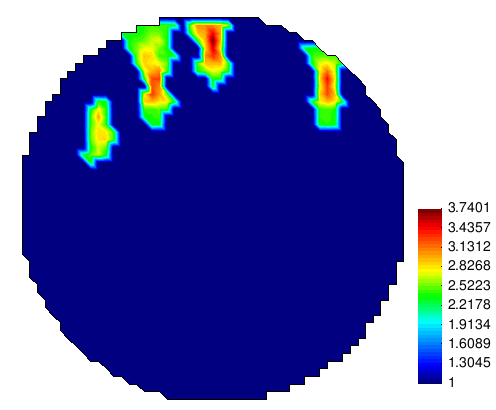}}
   \\
   \hline
    \multicolumn{4}{|c|}
                 {Test 4}
                 \\
                 \hline
      {\includegraphics[scale=0.2, clip=true,]{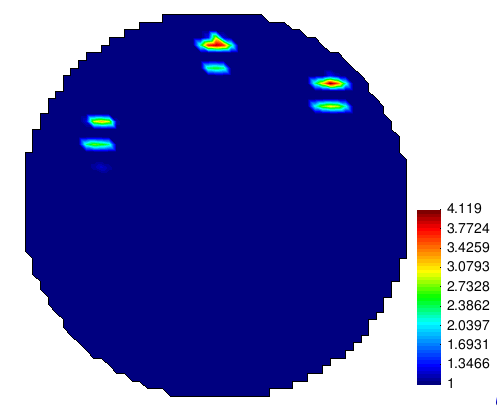}} &
 {\includegraphics[scale=0.2, clip=true,]{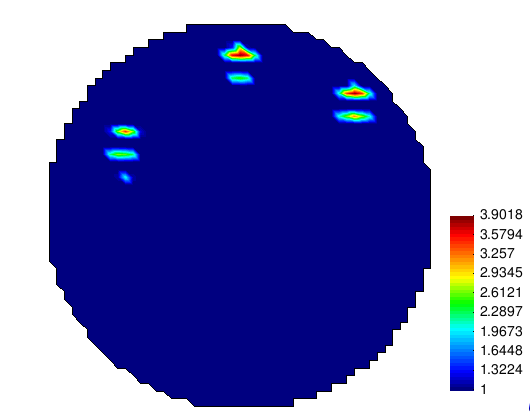}}     &
 {\includegraphics[scale=0.2,  clip=true,]{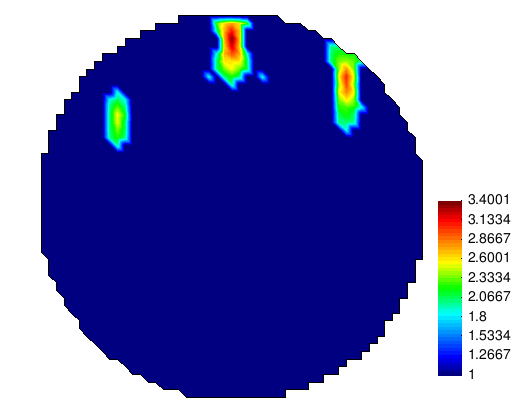}}     &
   {\includegraphics[scale=0.2,  clip=true,]{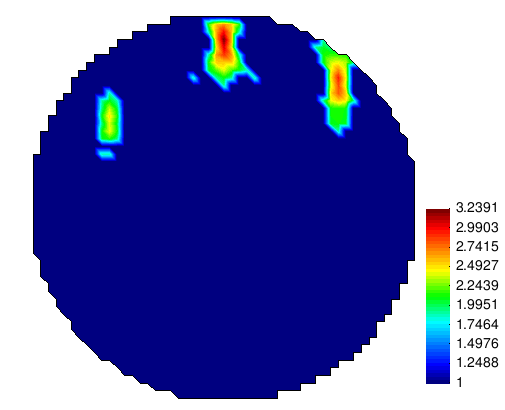}}
   \\
   \hline
\end{tabular}
 \end{center}
 \caption{ \protect\small \emph{  Reconstructions
   obtained in Tests 1-4 on a coarse mesh for different noise levels  $\delta$ in data. }}
 \label{fig:rec_test}
 \end{figure}

\section{Numerical Studies}\label{S4}
\label{sec:Numer-Simul}

In this section, we present numerical simulations of the
reconstruction of two unknown functions $\rho(x)$ and $p(x)$ of the
equation \eqref{11} using the domain decomposition method of
\cite{BAbsorb}.

To do that we decompose the computational domain $\Omega$ into two
subregions $\Omega_{FEM}$ and $\Omega_{FDM}$ such that $\Omega =
\Omega_{FEM} \cup \Omega_{FDM}$ with two layers of structured
overlapping nodes between these domains, see Figure \ref{fig:0_1} and
Figure 2 of \cite{B} for details about communication between
$\Omega_{FEM}$ and $\Omega_{FDM}$.  We will apply in our computations
the finite element method (FEM) in $\Omega_{FEM}$ and the finite
difference method (FDM) in $\Omega_{FDM}$.  We also decompose the
domain $\Omega_{FEM}$ into two different domains $\Omega_1, \Omega_2$
such that $\Omega_{FEM} = \Omega_1 \cup \Omega_2$ which are
intersecting only by their boundaries, see Figure \ref{fig:0_1}.
We use the domain decomposition approach in our computations since it
 is   efficiently implemented in
the high performance software package WavES \cite{waves} using C++ and
PETSc  \cite{petsc}.
For
further details about construction of $\Omega_{FDM}$ and
$\Omega_{FEM}$ domains as well as the domain decomposition method we
refer to \cite{BAbsorb}.

The
boundary $\partial \Omega$ of the domain $\Omega$ is such that $\partial
\Omega =\partial _{1} \Omega \cup \partial _{2} \Omega \cup \partial
_{3} \Omega$ where $\partial _{1} \Omega$ and $\partial _{2} \Omega$
are, respectively, top and bottom parts of $\Omega$, and $\partial
_{3} \Omega$ is the union of left and right sides of this
domain. We will collect time-dependent observations $\Gamma_1 :=
\partial_1 \Omega \times (0,T)$ at the backscattering side $\partial_1
\Omega$ of $\Omega$.  We also define $\Gamma_{1,1} := \partial_1
\Omega \times (0,t_1]$, $\Gamma_{1,2} := \partial_1 \Omega \times
(t_1,T)$, $\Gamma_2 := \partial_2 \Omega \times (0, T)$ and $\Gamma_3
:= \partial_3 \Omega \times (0, T)$.

We have used the following
 model problem in  all computations:
\begin{equation}\label{model1}
\begin{split}
\rho(x)\partial_t^2 u(x,t) - \ddd ( (p(x)\nabla u(x,t)) &= 0~ \mbox{in}~~ \Omega_T, \\
  u(x,0) = a(x), ~~~u_t(x,0) &= 0~ \mbox{in}~~ \Omega,     \\
\partial _{n} u& = f(t) ~\mbox{on}~ \Gamma_{1,1},
\\
\partial _{n}  u& =-\partial _{t} u ~\mbox{on}~ \Gamma_{1,2},
\\
\partial _{n} u& =-\partial _{t} u~\mbox{on}~ \Gamma_2, \\
\partial _{n} u& =0~\mbox{on}~ \Gamma_3.\\
\end{split}
\end{equation}
 In (\ref{model1}) the function $f(t)$  represents the single direction of a
 plane wave which is initialized at $\partial_1 \Omega$ in time
 $t=[0,2.0]$ and is defined as
 \begin{equation}\label{f}
 \begin{split}
 f(t) =\left\{
 \begin{array}{ll}
 \sin \left( \omega_f t \right) ,\qquad &\text{ if }t\in \left( 0,\frac{2\pi }{\omega_f }
 \right) , \\
 0,&\text{ if } t>\frac{2\pi }{\omega_f }.
 \end{array}
 \right.
 \end{split}
 \end{equation}
We  initialize initial condition $a(x)$ at the boundary $\partial_1 \Omega$ as
\begin{equation}\label{initcond}
\begin{split}
u(x,0) &= f_0(x)={\rm e}^{-(x_1^2 + x_2^2 + x_3^3)}  \cdot \cos  t|_{t=0} = {\rm e}^{-(x_1^2 + x_2^2 + x_3^3)}.
\end{split}
\end{equation}

We assume that both functions $\rho(x)=p(x)=1$ are known inside
$\Omega_{FDM} \cup \Omega_2$.  The goal of our numerical tests is to
reconstruct simultaneously two smooth functions $\rho(x), p(x)$ of the
domain $\Omega_{1}$ of Figure \ref{fig:0_1}.  The main feature of
these functions is that they model inclusions of a very small sizes
what can be of practical interest in real-life applications.


 \begin{figure}
 \begin{center}
   \begin{tabular}{|c|c|c|c|}
     \hline
     \multicolumn{4}{|c|}
     {\begin{tabular}{cc}
         adaptively  refined meshes& zoomed \\
         \hline
         \\
         {\includegraphics[scale=0.27, clip=true,]{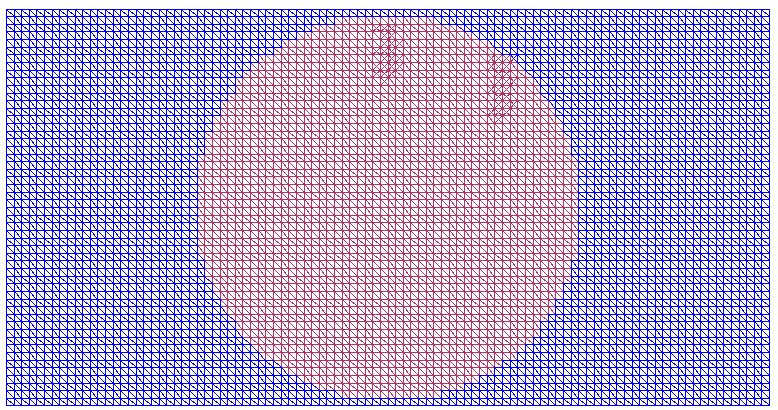}} &
         {\includegraphics[scale=0.27, trim = 6.0cm 0.0cm 0.0cm 0.0cm, clip=true,]{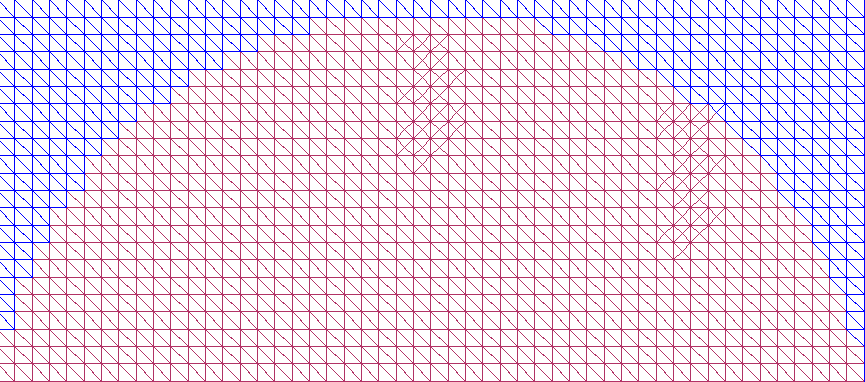}}    \\
once refined mesh &  \\ 
   {\includegraphics[scale=0.27, clip=true,]{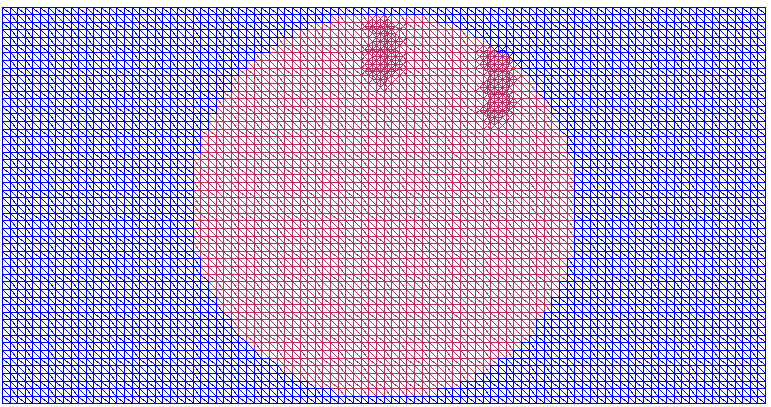}} &
   {\includegraphics[scale=0.27, trim = 6.0cm 0.0cm 0.0cm 0.0cm, clip=true,]{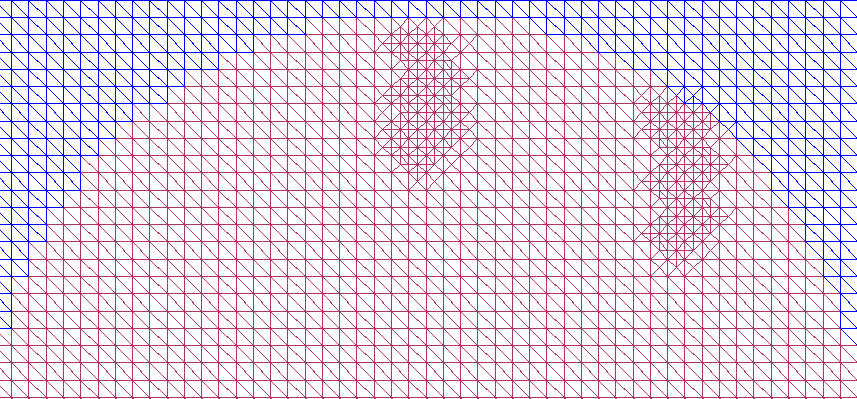}} \\
twice refined mesh &   \\
   {\includegraphics[scale=0.27, clip=true,]{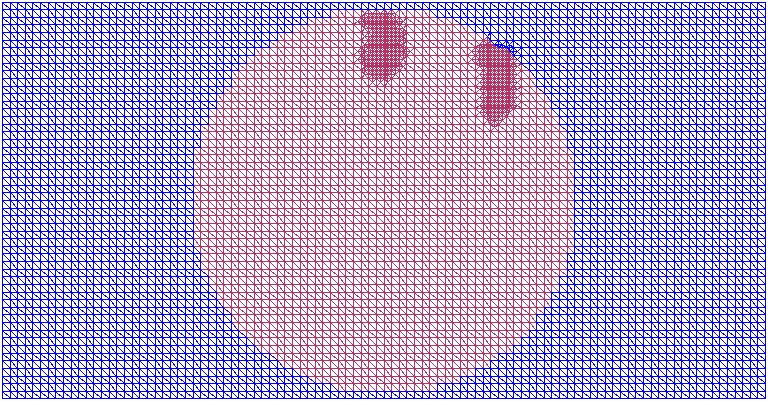}} &
   {\includegraphics[scale=0.27, trim = 6.0cm 0.0cm 0.0cm 0.0cm, clip=true,]{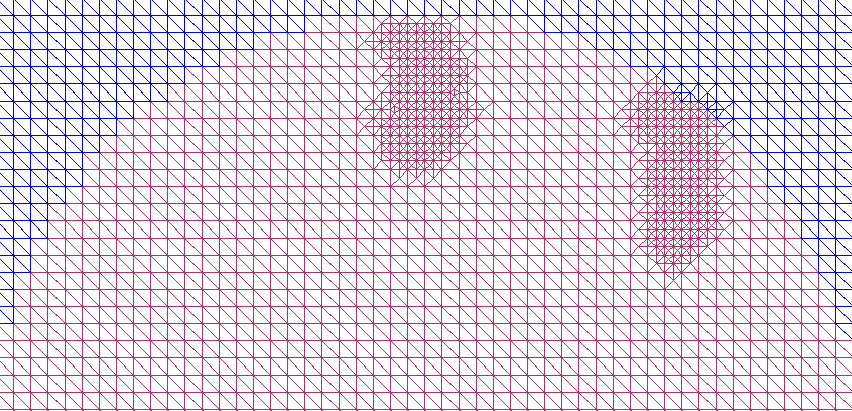}} \\
three times refined  mesh &  
     \end{tabular}} \\
  \hline
  $\rho(x), \delta=3\%$  & $ \rho(x), \delta=10\%$  &  $p(x), \delta= 3\%$ &  $p(x), \delta= 10\%$ \\
\hline
 {\includegraphics[scale=0.2, clip=true,]{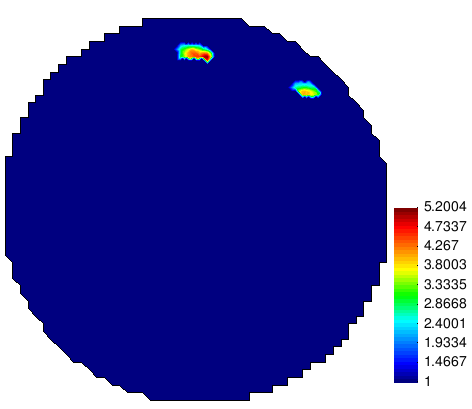}} &
 {\includegraphics[scale=0.2, clip=true,]{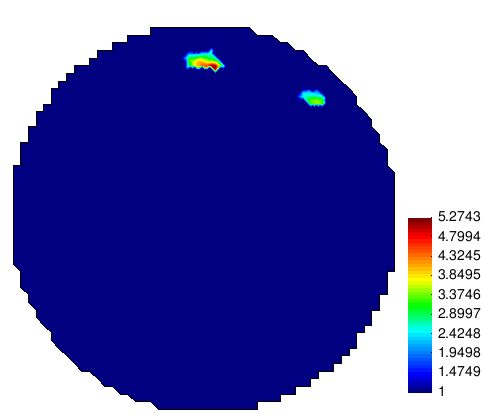}}     &
 {\includegraphics[scale=0.2,  clip=true,]{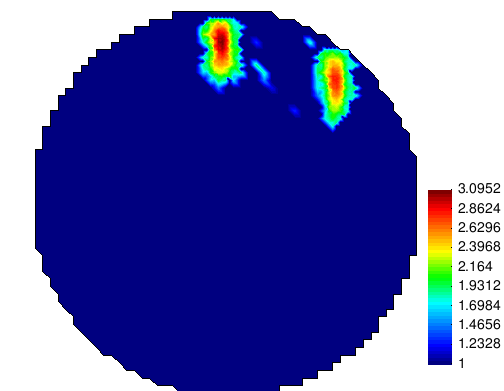}}     &
 {\includegraphics[scale=0.2,  clip=true,]{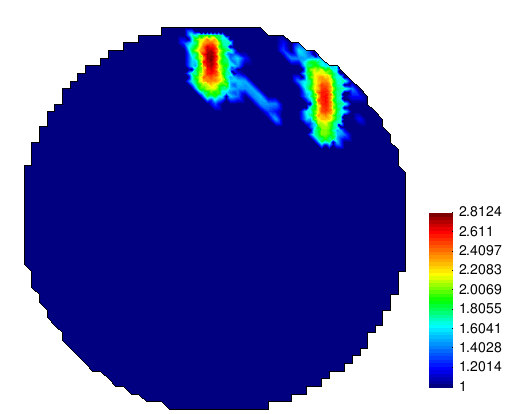}}
   \\
   \hline
\end{tabular}
 \end{center}
 \caption{ \protect\small \emph{Test 1: reconstructions obtained on
     three times adaptively refined mesh for different noise levels
     $\delta$ in data. }}
 \label{fig:rec_test1adapt}
 \end{figure}

 \begin{figure}
 \begin{center}
   \begin{tabular}{|c|c|c|c|}
     \hline
     \multicolumn{4}{|c|}
     {\begin{tabular}{cc}
         adaptively  refined meshes& zoomed \\
         \hline
         \\
         {\includegraphics[scale=0.27, clip=true,]{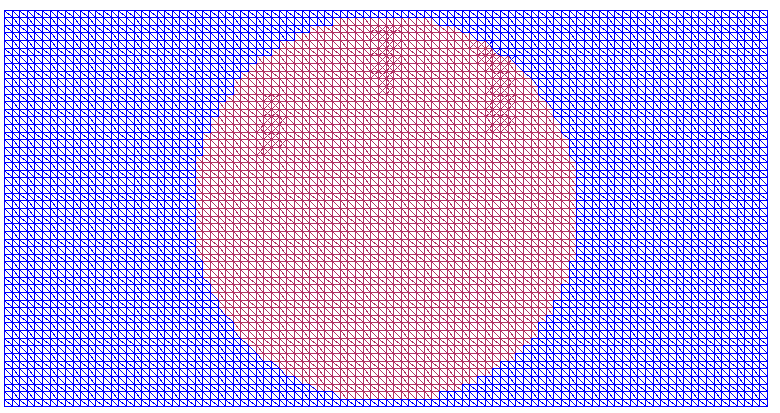}} &
         {\includegraphics[scale=0.23, trim = 0.0cm 0.0cm 4.0cm 0.0cm, clip=true,]{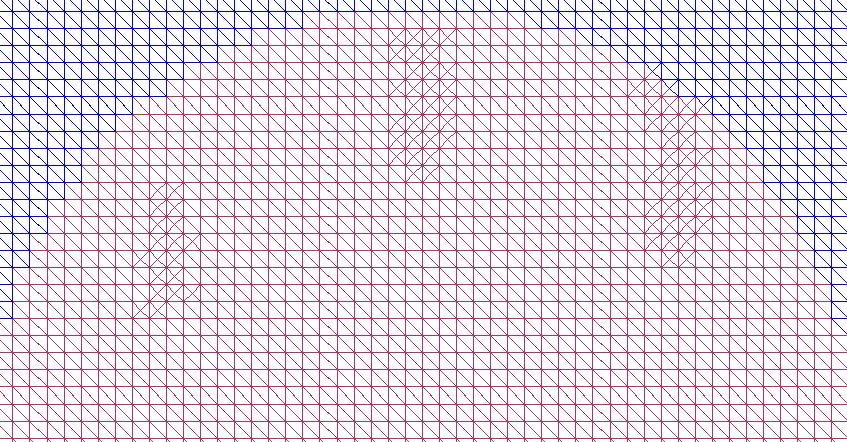}}    \\
once refined mesh& \\
   {\includegraphics[scale=0.27, clip=true,]{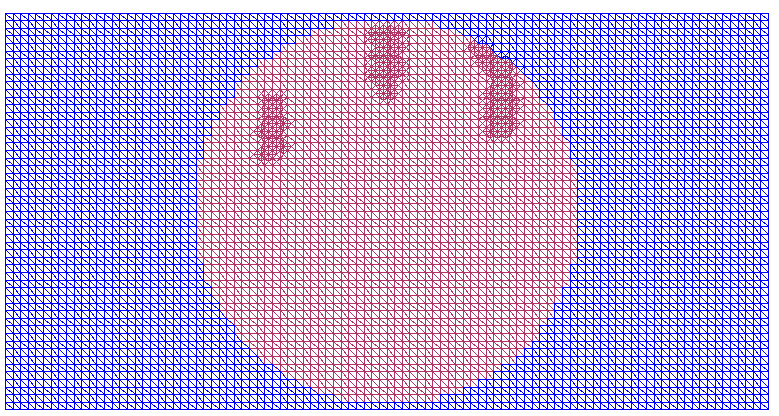}} &
   {\includegraphics[scale=0.23, trim = 0.0cm 0.0cm 4.0cm 0.0cm, clip=true,]{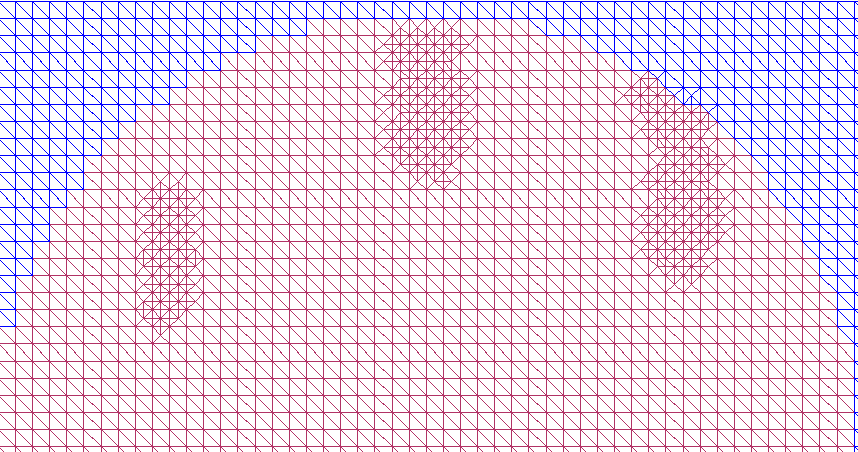}} \\
twice refined mesh&  \\
   {\includegraphics[scale=0.27, clip=true,]{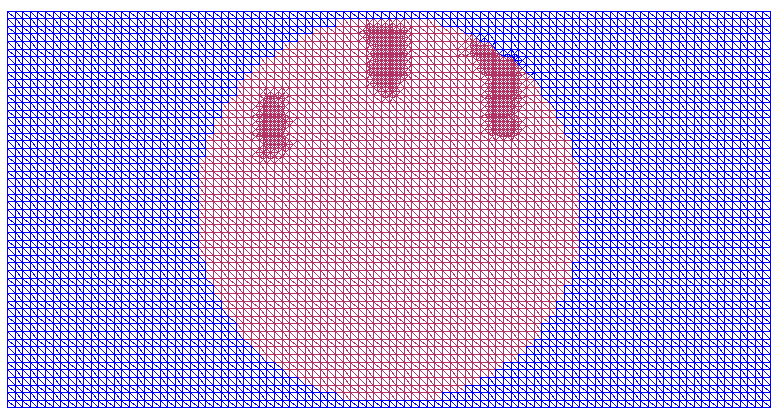}} &
   {\includegraphics[scale=0.23, trim = 0.0cm 0.0cm 4.0cm 0.0cm, clip=true,]{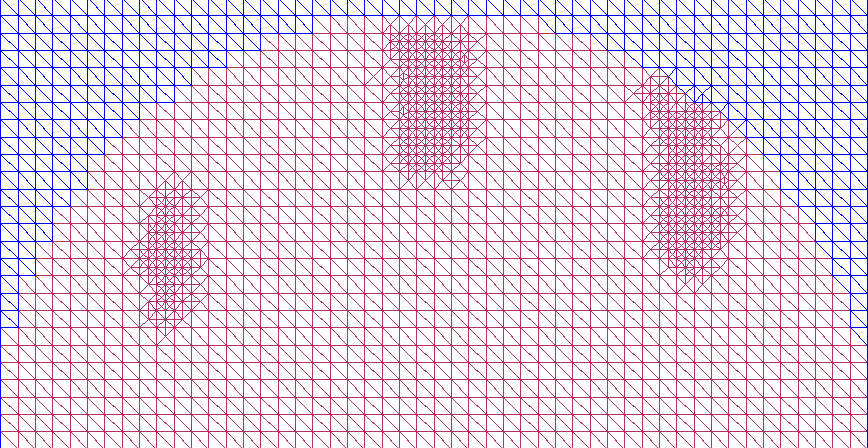}} \\
three times refined mesh & 
     \end{tabular}}
     \\
  \hline
  $\rho(x), \delta=3\%$  & $ \rho(x), \delta=10\%$  &  $p(x), \delta= 3\%$ &  $p(x), \delta= 10\%$ \\
\hline
 {\includegraphics[scale=0.2, clip=true,]{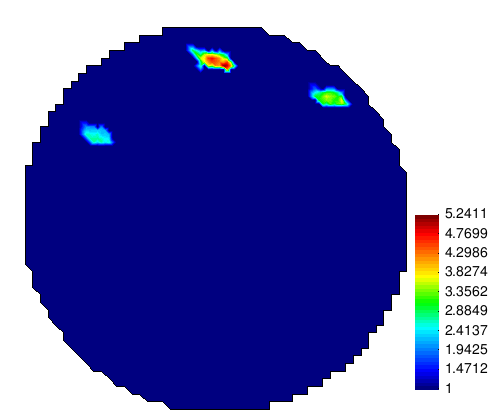}} &
 {\includegraphics[scale=0.2, clip=true,]{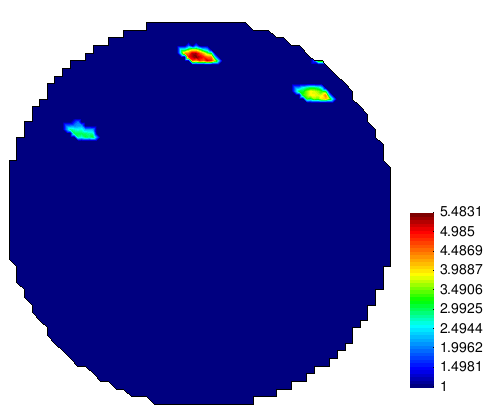}}     &
 {\includegraphics[scale=0.2,  clip=true,]{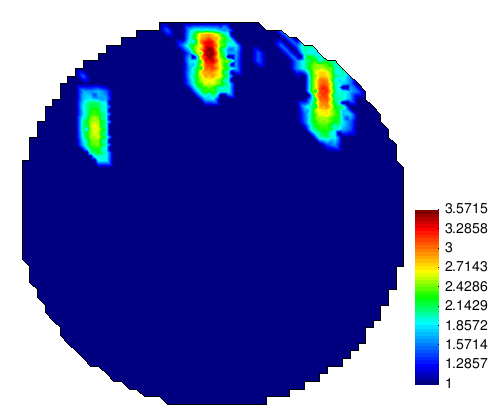}}     &
 {\includegraphics[scale=0.2,  clip=true,]{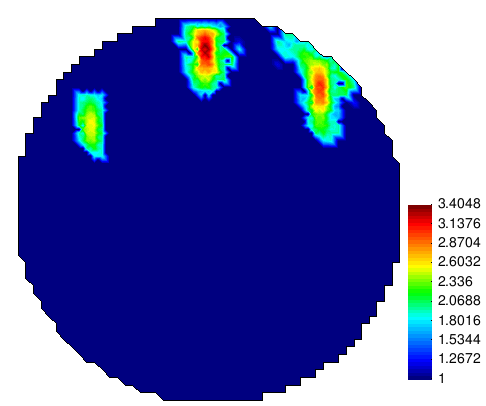}}
   \\
   \hline
\end{tabular}
 \end{center}
 \caption{ \protect\small \emph{Test 2: reconstructions obtained on
     two times adaptively refined mesh for different noise levels
     $\delta$ in data. }}
 \label{fig:rec_test2adapt}
 \end{figure}

 \begin{figure}
 \begin{center}
   \begin{tabular}{|c|c|c|c|}
     \hline
     \multicolumn{4}{|c|}
     {\begin{tabular}{cc}
         adaptively  refined meshes& zoomed \\
         \hline
         \\
         {\includegraphics[scale=0.27, clip=true,]{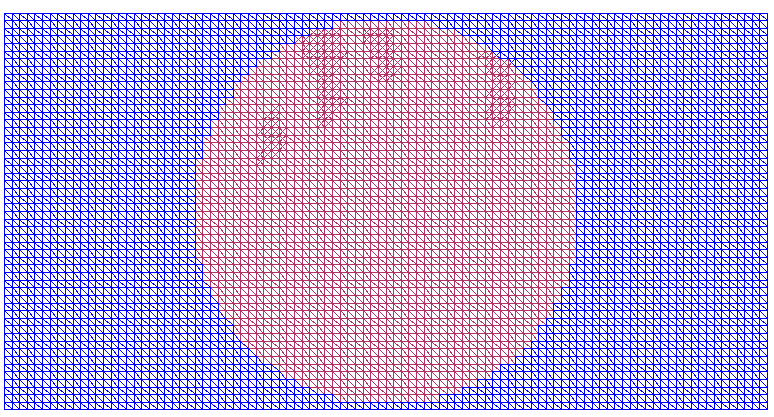}} &
         {\includegraphics[scale=0.23, trim = 0.0cm 0.0cm 4.0cm 0.0cm, clip=true,]{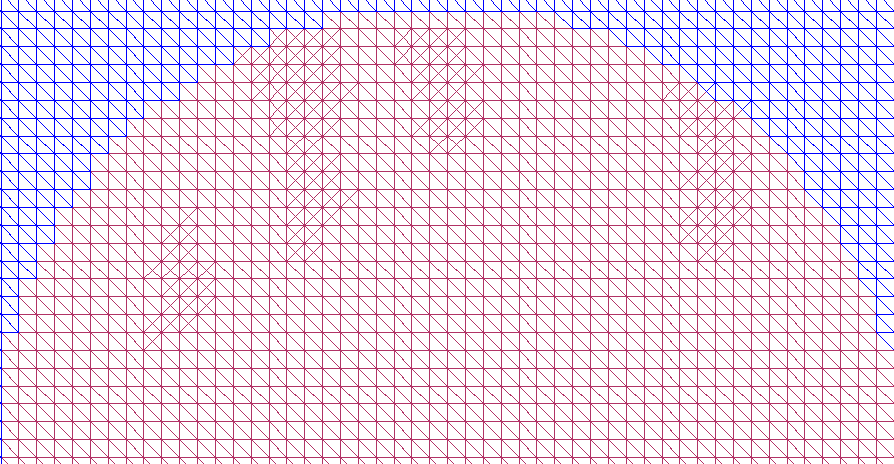}}    \\
once refined mesh & \\
   {\includegraphics[scale=0.27, clip=true,]{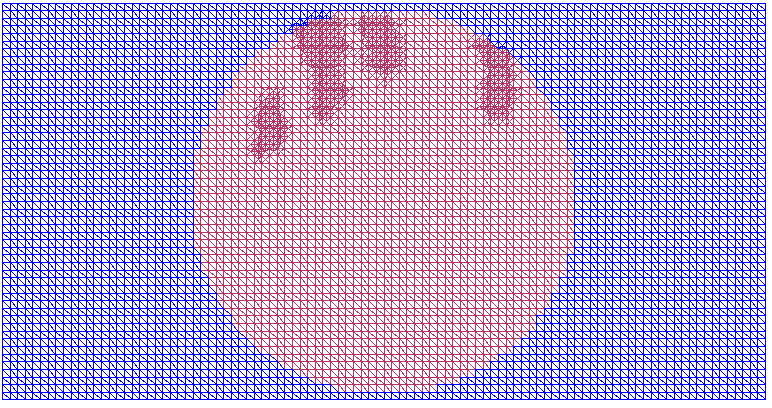}} &
   {\includegraphics[scale=0.23, trim = 0.0cm 0.0cm 4.0cm 0.0cm, clip=true,]{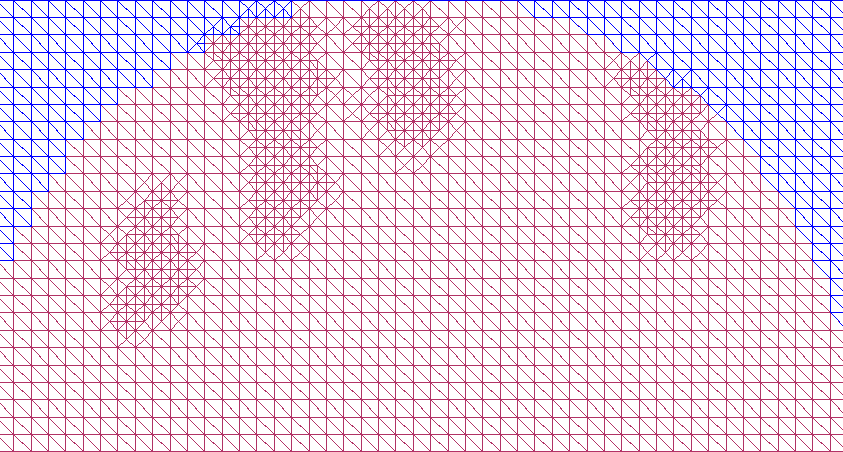}} \\
twice refined mesh & \\
   {\includegraphics[scale=0.27, clip=true,]{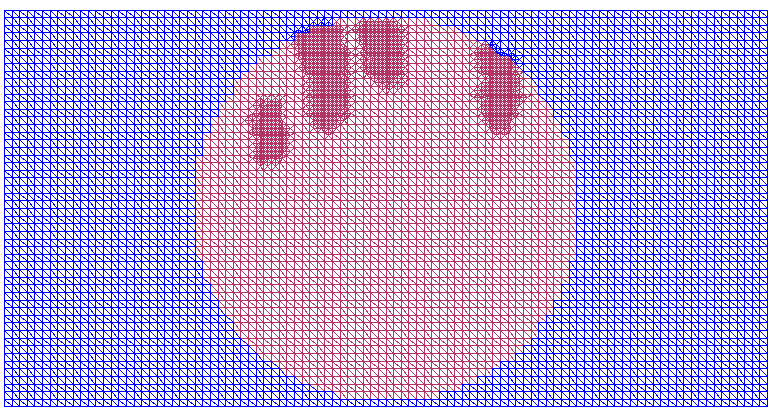}} &
   {\includegraphics[scale=0.23, trim = 0.0cm 0.0cm 4.0cm 0.0cm, clip=true,]{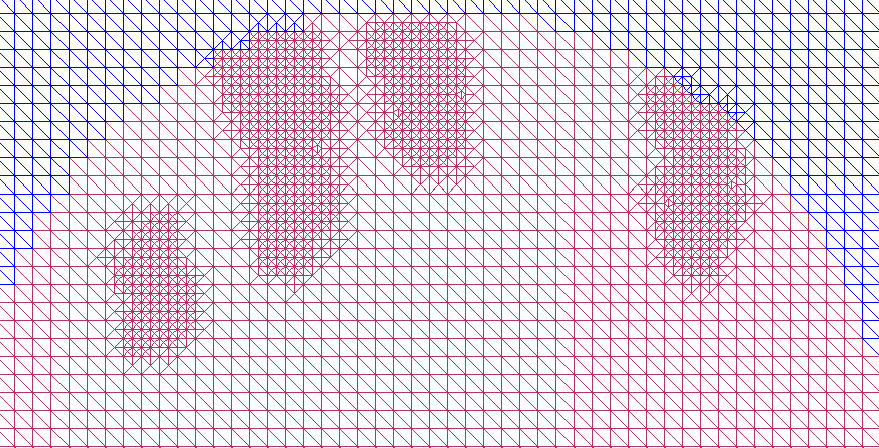}} \\
three times refined mesh & 
     \end{tabular}}
     \\
  \hline
  $\rho(x), \delta=3\%$  & $ \rho(x), \delta=10\%$  &  $p(x), \delta= 3\%$ &  $p(x), \delta= 10\%$ \\
\hline
 {\includegraphics[scale=0.2, clip=true,]{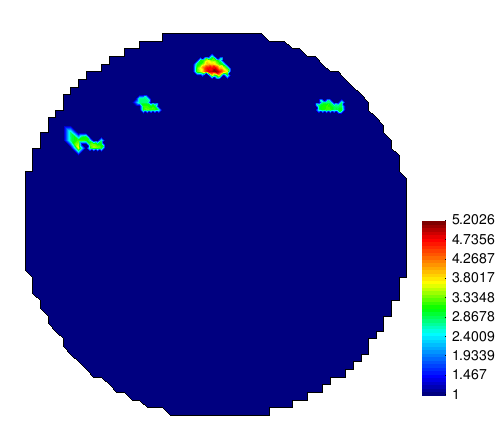}} &
 {\includegraphics[scale=0.2, clip=true,]{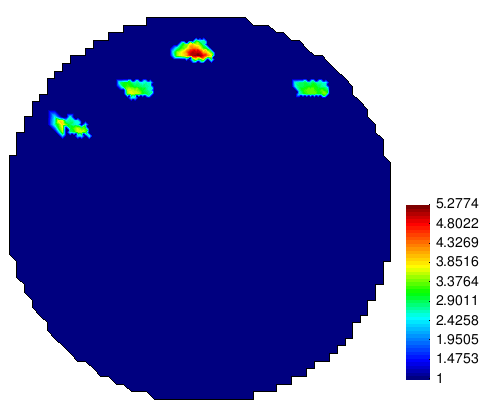}}     &
 {\includegraphics[scale=0.2,  clip=true,]{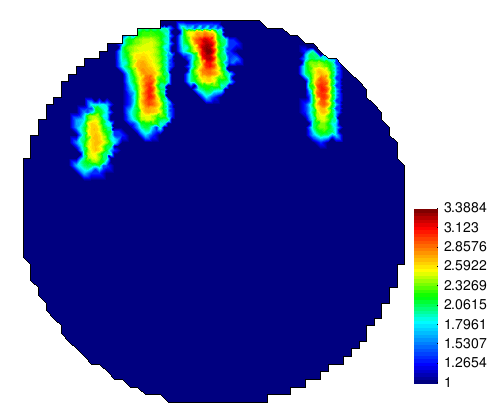}}     &
 {\includegraphics[scale=0.2,  clip=true,]{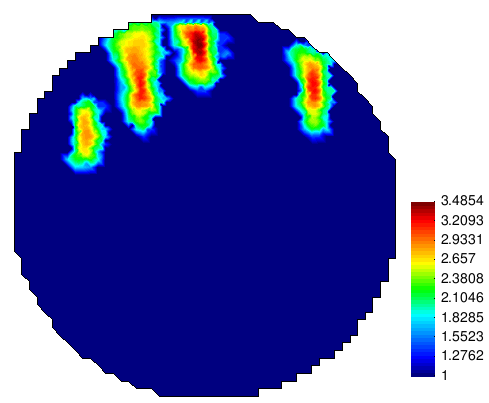}}
   \\
   \hline
\end{tabular}
 \end{center}
 \caption{ \protect\small \emph{Test 3: reconstructions obtained on
     three times adaptively refined mesh for different noise levels
     $\delta$ in data. }}
 \label{fig:rec_test3adapt}
 \end{figure}

We set the dimensionless computational domain
$\Omega$ in the domain decomposition  as
 \begin{equation*}
 \Omega = \left\{ x= (x_1,x_2): x_1 \in (-1.1, 1.1), x_2  \in (-0.62,0.62)\right\},
 \end{equation*}
and  the  domain $\Omega_{FEM}$ as
 \begin{equation*}
 \Omega_{FEM} = \left\{ x= (x_1,x_2): x_1 \in (-1.0,1.0), x_2 \in (-0.52,0.52) \right\}.
 \end{equation*}
 We choose the mesh size $h=0.02$ in $\Omega = \Omega_{FEM} \cup
 \Omega_{FDM}$, as well as in the overlapping regions between FE/FD
 domains.

 We assume that our two  functions $\rho(x), p(x)$ belongs to the set
 of admissible parameters
 \begin{equation}\label{admpar}
 \begin{split}
   M_{\rho} &= \{ \rho\in C^2(\overline{\Omega });~ 1\leq \rho(x)\leq 10\}, \\
   M_{p} &= \{ p \in C^2(\overline{\Omega }); ~1\leq p(x)\leq 5\}.
 \end{split}
 \end{equation}

 We define now our  coefficient inverse problem which we use in computations.

 \textbf{Inverse Problem (IP) } \emph{Assume that the functions}
 $\rho(x), p(x)$ \emph{\ of the model problem \eqref{model1} are
   unknown. Let these functions satisfy
   conditions (\ref{admpar},) and $\rho(x)=1, p(x)=1$ in the domain
 }$\Omega \backslash \Omega_{\rm FEM}$\emph{. Determine the functions}
 $\rho(x), p(x) $\emph{\ for }$x\in \Omega \backslash \Omega_{\rm FDM},$
 \emph{\ assuming that the following function }$\tilde
 u\left(x,t\right) $\emph{\ is known}
\begin{equation}\label{backscat}
  u \left(x,t\right) = \tilde u \left(x,t\right) , \forall \left( x,t\right)
  \in  \Gamma_1.
\end{equation}

To determine both coefficients $\rho(x), p(x)$  in inverse problem \textbf{{IP} }
we  minimize the
following Tikhonov functional
\begin{equation}
\begin{split}
J( \rho(x), p(x)) &:= J(u, \rho, p) =
\frac{1}{2} \int_{\Gamma_1} (u - \tilde{u} )^2 z_{\delta }(t) ds dt
 \\
&+\frac{1}{2} \alpha_1  \int_{\Omega}( \rho -  \rho_0)^2~~ dx
+ \frac{1}{2} \alpha_2  \int_{\Omega}( p -   p_0)^2~~ dx.
\label{functional}
\end{split}
\end{equation}
Here, $\tilde{u}$ is the observed function $u$ in time at the
backscattered boundary $\partial_1 \Omega$, the function $u$ satisfy
the equations (\ref{model1}) and thus depends on $\rho, p$, $\rho_0,
p_0$ are the initial guesses for $\rho, p$, correspondingly, and
$\alpha_i, i=1,2$, are regularization parameters.  We take $\rho_0=1,
p_0=1$ at all points of the computational domain since previous
computational works \cite{BAbsorb,BNAbsorb,BMaxwell,BH} as well as
experimental works of \cite{litman1,meaney1} have shown that a such
choice gives good results of reconstruction.  Here, $z_{\delta }(t)$
is a cut-off function
chosen as in \cite{BAbsorb, BNAbsorb,BH}.
This function is introduced to ensure the compatibility
conditions at $\overline{\Omega}_{T}\cap \left\{ t=T\right\} $ for the
adjoint problem, see details in \cite{BAbsorb, BNAbsorb,BH}.

To solve the minimization problem we take into
account conditions \eqref{admpar} and introduce the
Lagrangian
\begin{equation}\label{lagrangian}
\begin{split}
L(v) &= J(u, \rho, p)
+ \int_{\Omega} \int_0^T \lambda \Big (\rho \frac{\partial^2 u}{\partial t^2} - \ddd ( p \nabla  u)  \Big)~ dxdt,
\end{split}
\end{equation}
where $v=(u,\lambda, \rho, p)$.  Our goal is
 to find a stationary point of the Lagrangian with respect to $v$
satisfying $\forall \bar{v}= ( \bar{u}, \bar{\lambda},
\bar{\rho}, \bar{p})$
\begin{equation}
 L'(v; \bar{v}) = 0 ,  \label{scalar_lagr2}
\end{equation}
where $ L^\prime (v;\cdot )$ is the Jacobian of $L$ at $v$.  To find
optimal parameters $\rho, p$ from \eqref{scalar_lagr2} we use the
conjugate gradient method with iterative choice of the regularization
parameters $\alpha_j, j=1,2$, in \eqref{functional}.  More precisely, in all
our computations we choose the regularization parameters iteratively
 as  was proposed  in \cite{BKS},
such that $\alpha^n_j=\alpha^0_j(n+1)^{-q}$, where $n$ is the number
of iteration in the conjugate gradient method, $q \in (0,1)$ and
$\alpha^0_j$ are initial guesses for $\alpha_j, j=1,2$. Similarly with
\cite{Klibanov_Bakushinsky_Beilina} we take
$\alpha_j=\delta^{\zeta}$, where $\delta$ is the noise level and
$\zeta$ is a small number taken in the interval $(0,1)$.
Different techniques for the
  computation of a regularization parameter are presented in works
  \cite{Engl, IJT11, KNS, tikhonov}, and checking of performance of these
  techniques for the solution of our inverse problem can be challenge
  for our future research.

 To generate backscattered data we solve the model problem
 (\ref{model1}) in time $T=[0,2.0]$ with the time step $\tau=0.002$
 which satisfies to the CFL condition \cite{CFL67}.  In order to check
 performance of the reconstruction algorithm we supply simulated
 backscattered data at $\partial_1 \Omega$ by additive, as in
 \cite{BAbsorb,BNAbsorb,BH},
 noise
 $\delta=3\%, 10\%$.
 Similar results of reconstruction are obtained
 for random noise and they
will be presented in the forthcoming publication.

\begin{table}[tbp]
{\footnotesize Table 1. \emph{Computational results of the
    reconstructions on a coarse and on adaptively refined meshes together with computational
    errors in the maximal contrast of $\rho(x), p(x)$ in percents. Here,
    $N_{\rho}^j, N_{p}^j$ denote the final number of
    iterations in the conjugate gradient method on $j$ times refined mesh for reconstructed
    functions $\rho$ and $p$, respectively.}}  \par
\vspace{2mm}
\centerline{
  \begin{tabular}{|c|c|}
     \hline
     \multicolumn{2}{|c|}
                 {Coarse mesh}
                 \\
 \hline
   $\delta=3\%$ &  $\delta = 10\%$
 \\
\begin{tabular}{l|l|l|l} \hline
Case & $\max_{\Omega_1} \rho$ &  error, \% & $N_{\rho}^0$  \\ \hline
Test 1 & 4.13  & 17.4 & 13   \\
Test 2 & 4.38 &  12.4 & $15$    \\
Test 3 & 5.14  &  2.8 &  $16$   \\
Test 4 &  4.12 &  17.6 &  $14$  \\
\end{tabular}
 &
\begin{tabular}{l|l|l|l} \hline
Case & $\max_{\Omega_1} \rho$ &    error, \% & $N_{\rho}^0$  \\ \hline
Test 1 & 3.74  & 25.2   &  12    \\
Test 2 & 3.84  & 23.2   &  $13$  \\
Test 3 & 5.08  & 1.6   &   $16$  \\
Test 4 & 3.9   &  22  &   $13$ \\
\end{tabular}
\\
   \hline
\begin{tabular}{l|l|l|l} \hline
Case & $\max_{\Omega_1} p$ &  error, \% & $N_p^0$  \\ \hline
Test 1 & 3.09 & 3 &  $13$    \\
Test 2 & 3.63  &  21   &    $15$    \\
Test 3 & 3.63 &  21 &  $16$    \\
Test 4 & 3.4 &  13.3 &   $14$   \\
\end{tabular}
 &
\begin{tabular}{l|l|l|l} \hline
Case & $\max_{\Omega_1} p$ &    error, \% & $N_p^0$  \\ \hline
Test 1 & 2.9  &  3.33  &   $12$   \\
Test 2 & 3.16 & 5.33   &  $13$    \\
Test 3 & 3.74 &  24.67  &  $16$    \\
Test 4 & 3.24  &   8  &   13 \\
\end{tabular}
\\
\hline
\multicolumn{2}{|c|}
                 {Adaptively refined mesh}
                 \\
\begin{tabular}{l|l|l|l} \hline
Case & $\max_{\Omega_1} \rho$ &  error, \% & $N_{\rho}^j$  \\ \hline
Test 1 & 5.2  & 4 &  $N_{\rho}^3 =9$   \\
Test 2 & 5.24  & 4.8  & $N_{\rho}^2 = 6 $     \\
Test 3 & 5.2 & 4 & $N_{\rho}^3= 1$   \\
Test 4 &  5.5  &  10 & $N_{\rho}^3= 8$   \\
\end{tabular}
 &
\begin{tabular}{l|l|l|l} \hline
Case & $\max_{\Omega_1} \rho$ &    error, \% & $N_{\rho}^j$  \\ \hline
Test 1 & 5.3 & 6  &  $N_{\rho}^3 = 7$   \\
Test 2 & 5.5 & 10   &  $N_{\rho}^2 = 10$   \\
Test 3 & 5.28 & 5.6 &  $N_{\rho}^3 =1$   \\
Test 4 & 5.36  & 7.2  &   $N_{\rho}^3 = 8$  \\
\end{tabular}
\\
\begin{tabular}{l|l|l|l} \hline
Case & $\max_{\Omega_1} p$ &  error, \% & $N_p^j$  \\ \hline
Test 1 & 3.1 & 3.33 & $N_{p}^3= 9$   \\
Test 2 & 3.57  & 19  & $N_{p}^2 = 6$    \\
Test 3 &  3.39 & 13 & $N_{p}^3=  1$   \\
Test 4 &  3.4 & 13.3 & $N_{p}^3=14  $  \\
\end{tabular}
 &
\begin{tabular}{l|l|l|l} \hline
Case & $\max_{\Omega_1} p$ &    error, \% & $N_p^j$  \\ \hline
Test 1 &  2.8 &  6.67 & $N_{p}^3=7$  \\
Test 2 &  3.4  & 13.3  & $N_{p}^2=9$   \\
Test 3 &  3.49  & 16.3 & $N_{p}^3=1$  \\
Test 4 &   3.26 &  8.67 & $N_{p}^3=10$  \\
\end{tabular}
\\
\hline
\end{tabular}}
\end{table}

 \subsection{Test 1}

\label{sec:test1}

In this test we present numerical results of the simultaneous
reconstruction of two functions $\rho(x)$ and $p(x)$ given by
\begin{equation}\label{2gaussians}
\begin{split}
\rho(x) = 1.0 &+ 4.0 \cdot {\rm e}^{-((x_1 -0.3)^2 + {(x_2 - 0.3)}^2)/0.001} \\
&+ 4.0 \cdot  {\rm e}^{-(x_1^2 + {(x_2 - 0.4)}^2)/0.001} , \\
p(x) = 1.0 &+ 2.0 \cdot {\rm e}^{-((x_1 -0.3)^2 + {(x_2 - 0.3)}^2)/0.001} \\
&+ 2.0 \cdot  {\rm e}^{-(x_1^2 + {(x_2 - 0.4)}^2)/0.001}, \\
\end{split}
\end{equation}
 which are presented in Figure ~\ref{fig:exact_gaussians}.

 Figures \ref{fig:rec_test} show results of the reconstruction on a
 coarse mesh with additive noise $\delta= 3\%, 10\%$ in data.
 We observe that the location of both functions $\rho, p$ given by
 (\ref{2gaussians}) is imaged correctly. We refer to Table 1 for the
 reconstruction of the contrast in both  functions.

 To improve contrast and shape of the reconstructed functions
 $\rho(x)$ and $p(x)$ we run computations again using an adaptive
 conjugate gradient method similar to the one of \cite{BH}. Figure
 \ref{fig:rec_test1adapt}  and Table 1 show results of reconstruction on the three
 times locally refined mesh. We observe that we achieve better contrast
 for both functions  $\rho(x)$ and $p(x)$, as well as better shape
  for the function $\rho(x)$.

 \subsection{Test 2}

\label{sec:test2}

In this test we present numerical results of the reconstruction of
the functions $\rho(x)$ and $p(x)$ given by three Gaussians shown in
Figure ~\ref{fig:exact_gaussians} and given by
\begin{equation}\label{3gaussians}
\begin{split}
\rho(x) = 1.0 &+ 4.0 \cdot {\rm e}^{-((x_1 -0.3)^2 + {(x_2 - 0.3)}^2)/0.001} \\
&+ 4.0 \cdot  {\rm e}^{-(x_1^2 + {(x_2 - 0.4)}^2)/0.001}\\
&+ 4.0 \cdot  {\rm e}^{-((x_1 + 0.3)^2 + {(x_2 - 0.2)}^2)/0.001}, \\
p(x) = 1.0 &+ 2.0 \cdot {\rm e}^{-((x_1 -0.3)^2 + {(x_2 - 0.3)}^2)/0.001} \\
&+ 2.0 \cdot  {\rm e}^{-(x_1^2 + {(x_2 - 0.4)}^2)/0.001} \\
&+ 2.0 \cdot  {\rm e}^{-((x_1 + 0.3)^2 + {(x_2 - 0.2)}^2)/0.001}.
\end{split}
\end{equation}

 Figures \ref{fig:rec_test} show results of the reconstruction on a
 coarse mesh with additive noise $\delta= 3\%, 10\%$ in data.
 We observe that the location of three Gaussians for both functions
 $\rho, p$ is imaged correctly, see Table 1 for the reconstruction of
 contrast in these functions.

 To improve contrast and shape of the reconstructed functions
 $\rho(x)$ and $p(x)$ we run computations again using an adaptive
 conjugate gradient method similar to the one  of \cite{BH}. Figure
 \ref{fig:rec_test2adapt}  and Table 1 show results of reconstruction on the two
 times locally refined mesh.
 We observe that we achieve better contrast
 for both functions  $\rho(x)$ and $p(x)$, as well as better shape
  for the function $\rho(x)$. Results on the three times refined mesh
 were similar to the results obtained on a two times refined mesh, and
 we  are not presenting them here.

 \subsection{Test 3}

\label{sec:test3}

This test presents numerical results of the reconstruction of
the functions $\rho(x)$ and $p(x)$ given by four different  Gaussians shown in
Figure ~\ref{fig:exact_gaussians} and given by
\begin{equation}\label{4difgaussians}
\begin{split}
\rho(x) = 1.0 &+ 4.0 \cdot {\rm e}^{-((x_1 -0.3)^2 + {(x_2 - 0.3)}^2)/0.001} \\
&+ 4.0 \cdot  {\rm e}^{-(x_1^2 + {(x_2 - 0.4)}^2)/0.001}\\
&+ 4.0 \cdot  {\rm e}^{-((x_1 + 0.3)^2 + {(x_2 - 0.2)}^2)/0.001} \\
&+ 4.0 \cdot  {\rm e}^{-((x_1 + 0.15)^2 + {(x_2 - 0.3)}^2)/0.001}, \\
p(x) = 1.0 &+ 2.0 \cdot {\rm e}^{-((x_1 -0.3)^2 + {(x_2 - 0.3)}^2)/0.001} \\
&+ 2.0 \cdot  {\rm e}^{-(x_1^2 + {(x_2 - 0.4)}^2)/0.001} \\
&+ 2.0 \cdot  {\rm e}^{-((x_1 + 0.3)^2 + {(x_2 - 0.2)}^2)/0.001} \\
&+ 2.0 \cdot  {\rm e}^{-((x_1 + 0.15)^2 + {(x_2 - 0.3)}^2)/0.001}.
\end{split}
\end{equation}

 Figures \ref{fig:rec_test} show results of the reconstruction of four
 Gaussians on a coarse mesh with additive noise $\delta= 3\%, 10\%$ in
 data.  We have obtained similar results as in the two previous tests:
 the location of four Gaussians for both functions $\rho, p$ already
 on a coarse mesh is imaged correctly.  However, as follows from the
 Table 1, the contrast should be improved.  Again, to improve the
 contrast and shape of the Gaussians we run an adaptive conjugate
 gradient method similar to one of  \cite{BH}. Figure \ref{fig:rec_test3adapt}
 shows results of reconstruction on the three times locally refined
 mesh. Using Table 1 we observe that we achieve better contrast for
 both functions $\rho(x)$ and $p(x)$, as well as better shape for the
 function $\rho(x)$.

 \subsection{Test 4}

\label{sec:test4}

In this test we tried to reconstruct four  Gaussians shown in
Figure ~\ref{fig:exact_gaussians} and given by
\begin{equation}\label{4gaussians}
\begin{split}
\rho(x) = 1.0 &+ 4.0 \cdot {\rm e}^{-((x_1 -0.3)^2 + {(x_2 - 0.3)}^2)/0.001} \\
&+ 4.0 \cdot  {\rm e}^{-(x_1^2 + {(x_2 - 0.4)}^2)/0.001}\\
&+ 4.0 \cdot  {\rm e}^{-((x_1 + 0.3)^2 + {(x_2 - 0.2)}^2)/0.001}  \\
&+ 4.0 \cdot  {\rm e}^{-(x_1^2 + {(x_2 - 0.2)}^2)/0.001}, \\
p(x) = 1.0 &+ 2.0 \cdot {\rm e}^{-((x_1 -0.3)^2 + {(x_2 - 0.3)}^2)/0.001} \\
&+ 2.0 \cdot  {\rm e}^{-(x_1^2 + {(x_2 - 0.4)}^2)/0.001} \\
&+ 2.0 \cdot  {\rm e}^{-((x_1 + 0.3)^2 + {(x_2 - 0.2)}^2)/0.001} \\
&+ 2.0 \cdot  {\rm e}^{-(x_1^2 + {(x_2 - 0.2)}^2)/0.001}.
\end{split}
\end{equation}
We observe that two Gaussians in this example are located one under another one.
Thus, backscattered data from these two Gaussians will be superimposed and
thus, we expect to reconstruct only three Gaussians from four.

 Figure \ref{fig:rec_test} shows results of the reconstruction of
 these four Gaussians on a coarse mesh with additive noise $\delta=
 3\%, 10\%$ in data.  As expected, we could reconstruct only three
 Gaussians from four, see Table 1 for reconstruction of the contrast
 in them.  Even application of the adaptive algorithm can not give us
 the fourth Gaussian. However, the contrast in the reconstructed
 functions is improved, as in Test 3.

\section{Conclusions}\label{S5}

\label{sec:concl}

In this work we present theoretical and numerical studies of the
reconstruction of two space-dependent functions $\rho(x)$ and $p(x)$
in a hyperbolic problem.\\ In the theoretical part of this work we
derive a local Carleman estimate which allows to obtain a conditional
Lipschitz stability inequality for the inverse problem formulated in
section \ref{S1}. This stability is very important for our subsequent
numerical reconstruction of the two unknown functions $\rho(x)$ and
$p(x)$ in the hyperbolic model \eqref{model1}.\\ In the numerical part
we present a computational study of the simultaneous reconstruction of
two functions $\rho(x)$ and $p(x)$ in a hyperbolic problem
(\ref{model1}) from backscattered data using an adaptive domain
decomposition finite element/difference method similar to one
developed in \cite{BAbsorb, BH}.  In our numerical tests, we have
obtained stable reconstruction of the location and contrasts of both
functions $\rho(x)$ and $p(x)$ for noise levels $\delta = 3 \%, 10 \%$
in backscattered data.  Using results of Table 1 and Figures
\ref{fig:rec_test1adapt}--\ref{fig:rec_test3adapt} we can conclude,
that an adaptive domain decomposition finite element/finite difference
algorithm significantly improves qualitative and quantitative results
of the reconstruction obtained on a coarse mesh.

\section*{Acknowledgments}

The research of L. B.  is partially supported by the sabbatical
programme at the Faculty of Science, University of Gothenburg, Sweden.
The research of M.C. is partially supported by the guest programme of
the Department of Mathematical Sciences at Chalmers University of
Technology and Gothenburg University, Sweden. Research of M.Y. is
partially supported by Grant-in-Aid for Scientific Research (S)
15H05740 of Japan Society for the Promotion of Science.

\end{document}